\newtheorem{teo}{Theorem}[section]
\newtheorem{prop}[teo]{Proposition}
\newtheorem{lema}[teo]{Lemma}
\newtheorem{defnc}[teo]{Definition}
\newtheorem{coro}[teo]{Corollary}
\newtheorem{example}[teo]{Example}
\newtheorem{conj}[teo]{Conjecture}
\newcommand{\dl}{\lambda}
\newcommand{\da}{\alpha}
\newcommand{\db}{\beta}
\newcommand{\C}{\mathbb{C}}
\newcommand{\Q}{\mathbb{Q}}
\newcommand{\p}{\mathbb{P}}
\newcommand{\D}{\mathbb{D}}
\newcommand{\N}{\mathbb{N}}
\newcommand{\Fcal}{\mathcal{F}}
\newcommand{\mcf}{\mathcal{D}}
\newcommand{\tf}{\widetilde{\mathcal{D}}}
\newcommand{\fol}{\mathcal{F}}
\newcommand{\tilf}{\widetilde{\mathcal{F}}}
\def\rme{{\mathrm e}}
\begin{document}

\title{Topological aspects of completely integrable foliations}

\author{Susana Pinheiro \, \, \, \& \, \, \, Helena Reis}
\thanks{ The first author is partially supported by Funda\c{c}\~ao para a Ci\^encia e
Tecnologia (FCT) through CMUP and through the PhD grant SFRH/BD/61547/2009. The second
author is partially supported by FCT through CMUP and through the project
PTDC/MAT/103319/2008.}

\begin{abstract}
In this paper it is shown that the existence of two independent holomorphic first integrals for foliations by curves
on $(\C^3,0)$ is not a topological invariant. More precisely, we provide an example of two topologically equivalent
foliations such that only one of them admits two independent holomorphic first integrals. The existence of invariant
surfaces over which the induced foliation possesses infinitely many separatrices possibly constitutes the sole
obstruction for the topological invariance of complete integrability and a characterization of foliations admitting
this type of invariant surfaces is also given.
\end{abstract}

\maketitle

\section{Introduction}

A singular holomorphic foliation $\fol$ by curves on a neighborhood of the origin of $\C^n$ is, by definition,
obtained from the local solutions of some holomorphic vector field defined about the origin of $\C^n$ and having
a singular set of codimension at least two. The topology of these singular points
has been widely studied in two and higher dimensional spaces. In the linear context, the topological characterization
of hyperbolic singularities was investigated in \cite{G}, \cite{ladis1}, \cite{ladis2}. These results
were largely extended to the non-linear case in \cite{C-K-P} and, especially, in the work of Chaperon \cite{chaperon}.
Far more general singularities have been systematically studied by Seade, Verjovsky et al. cf. \cite{seade} and references therein.
On the other hand, the study of the topology of integrable systems, or ``nearly" integrable systems,
is a very classical theme, well-represented by the Russian school, for which there is a huge amount of literature.

In the context of singularities of holomorphic foliations in dimension~$2$, these two topics exhibit a remarkable
connection put forward in the seminal paper \cite{M-M}. Indeed, the main result of \cite{M-M} shows that the existence
of a holomorphic first integral for a singular holomorphic foliation defined about $(0,0) \in \C^2$ can be read off
from natural topological conditions. As a consequence, it follows that the existence of a non-constant holomorphic first integral is a
topological invariant of the foliation. In other words, for $n = 2$, consider two local foliations by curves $\fol_1,
\, \fol_2$ that are topologically equivalent in the sense that there is a homeomorphism $h$ defined about $(0,0) \in
\C^2$ and taking the leaves of $\fol_1$ to the leaves of $\fol_2$. Then $\fol_1$ admits a non-constant holomorphic
first integral if and only if so does $\fol_2$ (see~\cite{M-M} and, for a shorter proof, \cite{Mo}).

Possible generalizations of the above mentioned phenomenon have long attracted interest. First, a classical example
attributed to Suzuki and discussed in~\cite{C-M} shows that the existence of a meromorphic first integral is not a
topological invariant. Similarly, for $n = 3$, many experts have wondered whether the existence of two ``independent"
holomorphic first integrals would constitute a topological invariant of the singularity. The paper~\cite{C-S} is an attempt
at initiating a discussion in this direction. However, in the present work, this question will be answered in the negative.
Indeed, we shall prove:

\begin{teo}\label{contra-exemplo}
Denote by $\fol$ and $\mcf$ the foliations associated to the vector fields $X$ and $Y$, respectively, given by
\begin{align*}
X &= 2xy \frac{\partial }{\partial x} + (x^3+2y^2) \frac{\partial }{\partial y} - 2yz \frac{\partial }{\partial z} \, , \\
Y &= x(x - 2y^2 - y) \frac{\partial }{\partial x} + y(x - y^2 - y) \frac{\partial }{\partial y} - z(x - y^2 - y) \frac{\partial }{\partial z} \, .
\end{align*}
The foliations $\fol, \, \mcf$ are topological equivalent. Nonetheless $\fol$ admits two independent holomorphic
first integrals while $\mcf$ does not.
\end{teo}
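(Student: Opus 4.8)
The plan is to treat the three assertions separately: exhibiting the two first integrals of $\fol$, ruling out a second independent holomorphic first integral for $\mcf$, and finally constructing the topological equivalence, which I expect to be by far the hardest point. Throughout I would work with germs at the origin.

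For $\fol$, I would produce the integrals by inspecting the structure of $X$. Since $X(x)=2xy$ and $X(z)=-2yz$, the product satisfies $X(xz)=2xyz-2xyz=0$, so $f_1=xz$ is a first integral. Next, a direct computation gives $X(x^3-y^2)=4y(x^3-y^2)$ and $X(z^2)=-4yz^2$, whence $X\!\left(z^2(x^3-y^2)\right)=-4yz^2(x^3-y^2)+4yz^2(x^3-y^2)=0$; thus $f_2=z^2(x^3-y^2)$ is a second first integral. To see they are independent it suffices to note that the $dx\wedge dy$ component of $df_1\wedge df_2$ equals $-2yz^3\not\equiv 0$. Hence $\fol$ is the foliation by the (generic) fibres of $(f_1,f_2)\colon(\C^3,0)\to(\C^2,0)$ and is completely integrable.

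For $\mcf$, I would first observe that $Y(y)=y(x-y^2-y)$ and $Y(z)=-z(x-y^2-y)$, so $g_1=yz$ is a holomorphic first integral. The strategy for the negative statement is then to reduce to a two–dimensional problem on an invariant surface. The plane $\{z=0\}$ is invariant and the induced foliation there is generated by $Y_{xy}=x(x-2y^2-y)\partial_x+y(x-y^2-y)\partial_y$; the surfaces $\{yz=c\}$ carry copies of the same planar germ. A single blow-up of the origin shows $Y_{xy}$ to be dicritical: its lowest–order $1$–form is $(x-y)(x\,dy-y\,dx)$, which is annihilated by the radial vector field, so the exceptional divisor is non–invariant and the reduced foliation is transverse to it, producing infinitely many separatrices. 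The decisive point, however, is that integrating the reduced equation along the divisor produces a logarithmic term, so $Y_{xy}$ has non–closed leaves and admits no single–valued — in particular no holomorphic — first integral. Applying the same reduction to $X$ instead yields the polynomial first integral $t^2-x$ in the blown–up chart (equivalently the rational $H=(x^3-y^2)/x^2=f_2/f_1^2$ off $\{x=0\}$), so there the leaves are closed; this closed–versus–nonclosed dichotomy, rather than dicriticalness alone, is what separates the two cases. I would then invoke the characterization of invariant surfaces carrying infinitely many separatrices developed in the paper: complete integrability of the ambient foliation forces the induced foliation on each such surface to admit a holomorphic first integral (closed leaves), contradicting the logarithmic behaviour of $Y_{xy}$. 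Hence $\mcf$ admits no second holomorphic first integral independent of $g_1$.

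Finally, for the topological equivalence I note that near $0$ both $\fol$ and $\mcf$ have the $z$–axis $\{x=y=0\}$ as singular set (the second singular line $\{x=0,\,y=-1\}$ of $Y$ is irrelevant near the origin), and both are fibred by the invariant surfaces $\{xz=c\}$, respectively $\{yz=c\}$, over the value of the first integral. The idea is to build a leaf–preserving homeomorphism $h$ adapted to this fibration: match the base parameter $c$ and, over each surface, match the induced foliations, after passing through a common resolution that reduces both foliations to ones with only elementary singularities and comparing their combinatorial and separatrix data. The main obstacle is precisely this matching: the induced foliations differ analytically in an essential way — closed algebraic leaves for $\fol$ versus logarithmically spiralling, non–closed leaves for $\mcf$ — so the homeomorphism cannot respect any naive invariant of the individual surfaces; one must show that, once the transverse $z$–dynamics and the dicritical divisors are taken into account, the leaf–space topologies and holonomy representations coincide, making the closed–versus–nonclosed distinction invisible to a topological equivalence. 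Establishing this identification, and checking that the fibrewise homeomorphisms glue continuously across the most degenerate fibre through the origin as $c\to0$, is where essentially all the work lies.
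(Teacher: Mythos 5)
Your computation of the two first integrals of $\fol$ is correct and is essentially the paper's: $f_1=xz$ and $f_2=z^2(x^3-y^2)$ coincide (up to sign) with the paper's $G_{\fol}=xz$ and $F_{\fol}=(y^2-x^3)z^2$, and $df_1\wedge df_2\not\equiv 0$ does rule out any relation $f_2=f\circ f_1$. The negative statement for $\mcf$, however, rests on two claims that are both false. First, the factual one: $\mcf|_{\{z=0\}}$ is Suzuki's foliation, and the whole point of Suzuki's example (it is a counterexample to Thom's conjecture) is that \emph{all} of its leaves are closed analytic curves; its first integral $\frac{x}{y}\rme^{y(y+1)/x}$ is single-valued off $\{x=0\}$. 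So your ``decisive point'' — logarithmically spiralling, non-closed leaves for $Y_{xy}$ — is wrong, and the ``closed-versus-nonclosed dichotomy'' does not exist: both $\fol|_{\{z=0\}}$ (Cerveau--Mattei) and $\mcf|_{\{z=0\}}$ (Suzuki) are dicritical with only closed leaves. Second, the bridge lemma you invoke — that complete integrability of the ambient foliation forces a \emph{holomorphic} first integral on each invariant surface — is false, and is refuted by your own Part 1: $\fol$ is completely integrable, yet $\fol|_{\{z=0\}}$ has the meromorphic first integral $(y^2-x^3)/x^2$, hence infinitely many separatrices, hence no non-constant holomorphic first integral. The paper's Section~\ref{construction} observation is precisely that common factors of the two integrals produce only \emph{meromorphic} first integrals on the corresponding surfaces. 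The correct argument (the paper's) therefore needs Suzuki's theorem that his foliation admits no non-constant \emph{meromorphic} first integral: if $F,G$ were two independent holomorphic first integrals of $\mcf$, both would vanish identically on $\{z=0\}$ (a dicritical planar foliation admits no non-constant holomorphic first integral), so $F=z^kF_1$, $G=z^lG_1$, and $F^l/G^k$ would restrict to a non-constant meromorphic first integral of Suzuki's foliation — contradiction. Your route cannot be repaired without this input, which you never invoke.

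The third part is not a proof but a declaration of intent, and it is built on the same false dichotomy; note that if $\fol$ had only closed leaves while $\mcf$ had a non-closed one, the two foliations could not be topologically equivalent at all (a leaf-preserving homeomorphism preserves closedness of leaves), so your own picture would contradict the theorem you are proving — the tension you acknowledge at the end is not a difficulty to be overcome but an impossibility. The paper's actual construction is concrete and quite different from ``matching fibres of $xz$ and $yz$'': one blows up along the $z$-axis (the common singular curve, not the origin), reuses Cerveau--Mattei's planar homeomorphism $\varphi$ on the proper transform of $\{z=0\}$, and defines the third component of the conjugacy by the functional equation $\widetilde{G}_{\fol}\circ\Phi=\widetilde{G}_{\mcf}$, which yields the explicit formula $\Phi_3(x,t,z)=-\frac{t^2x}{1-\rme^{t^2x}}\,z$; this extends holomorphically across the exceptional divisor by l'H\^opital and Riemann extension. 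The conjugacy is then propagated to a neighborhood of the whole exceptional divisor by exhibiting both blown-up foliations as topologically trivial fibrations over discs and checking that the induced loop of fibre homeomorphisms is null-homotopic (Lemma~\ref{homotopiatrivial}), for which the special product structure of $\Phi$ (its first two components independent of $z$, its third linear in $z$) is used in an essential way. None of these steps appears, even in outline, in your sketch.
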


The example provided by the above mentioned foliations $\fol$ and $\mcf$ is clearly based on Suzuki's foliations
on $(\C^2,0)$ (cf.~\cite{Suzuki1}, \cite{Suzuki2}) since the restrictions of our foliations to the common invariant
plane $\{z = 0\}$ coincide with the latter.

The reader will certainly note that the singular set of the foliations considered in Theorem~\ref{contra-exemplo} is not reduced
to a single point and this might suggest that the ``correct" generalization of Mattei-Moussu theorem involves isolated
singularities. This is actually not at all the case, and to clarify this issue is precisely the aim of the second half of
this paper. As it will be seen, the upshot of Theorem~\ref{contra-exemplo} is that, as the problem was stated, the existence of
two independent holomorphic first integrals may give rise to meromorphic first integrals for the restriction of the
foliation to certain invariant surfaces. This follows from a simple observation that, apparently, was missed in some previous
works, cf. Section~\ref{construction}. It is the presence of these invariant surfaces that ultimately constitutes an essential
obstruction for the topological invariance of ``complete integrability". Throughout this work, a local foliation will be called
completely integrable if it possesses two {\it holomorphic} first integrals that, in addition, are independent in a natural
sense, cf. Definition~\ref{defindependent}. Concerning the role played by the above mentioned invariant surfaces, recall that a
deep study of topological properties of foliations on $(\C^2,0)$ possessing meromorphic first integrals was
conducted by M. Klughertz in~\cite{Martine}. Her techniques yield several examples where ``topological invariance"
for the existence of meromorphic first integrals fails. Relatively simple adaptations of the proof of
Theorem~\ref{contra-exemplo} then enables us to obtain several other examples of foliations on $(\C^3,0)$
for which the ``topological invariance" of the existence of two independent holomorphic first integrals is
not verified. In view of this, and modulo excluding the easier case of foliations possessing non-trivial
linear parts, it is tempting to propose the following:

\begin{conj}\label{conjecture}
Suppose that two foliations by curves on $(\C^3,0)$, $\fol_1, \, \fol_2$, are topologically equivalent and do not
admit invariant surfaces over which the induced foliations are dicritical. Then $\fol_1$ admits two holomorphic
first integrals if and only if so does $\fol_2$.
\end{conj}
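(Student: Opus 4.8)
The plan is to reduce the three-dimensional statement to the Mattei--Moussu theorem in dimension two by exploiting the interplay between complete integrability and the geometry of invariant surfaces. The starting point is the observation already emphasized in the introduction: if $\fol$ possesses two independent holomorphic first integrals $f,g$, then every invariant surface $S$ is a component of some level set $\{\phi(f,g)=0\}$, and the restriction $\fol|_S$ inherits a non-constant holomorphic first integral, namely the restriction of $f$ or of $g$. Consequently $\fol|_S$ can never be dicritical, so the non-dicriticality hypothesis is automatically satisfied by a completely integrable foliation. Since the hypotheses are symmetric in the two foliations, it suffices to treat one implication: assuming $\fol_1$ is completely integrable and that $h$ conjugates it to $\fol_2$, and that neither foliation carries a dicritical invariant surface, I would prove that $\fol_2$ is completely integrable as well. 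Note that for this direction the non-dicriticality of $\fol_1$ comes for free, so the working hypothesis is really the non-dicriticality of $\fol_2$.

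First I would encode complete integrability in terms of the transverse dynamics. In dimension two, Mattei--Moussu characterize the existence of a holomorphic first integral by the finiteness of the holonomy of each separatrix together with the closedness of all nearby leaves; both conditions are manifestly topological, which is what yields the invariance there. I would seek the analogous characterization here: $\fol$ admits two independent holomorphic first integrals precisely when all leaves are closed off the singular set, there are only finitely many separatrices and invariant surfaces through the origin, and the associated holonomy pseudogroup has finite orbits. The map $F=(f,g)\colon(\C^3,0)\to(\C^2,0)$ then realizes the space of leaves as a finite branched model over $(\C^2,0)$, and the entire structure is pinned down by this finite transverse holonomy.

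The next step is transport under $h$. Since $h$ carries leaves of $\fol_1$ to leaves of $\fol_2$, it conjugates the two holonomy pseudogroups and preserves closedness of leaves; thus finiteness of holonomy orbits and the closedness condition pass from $\fol_1$ to $\fol_2$. The non-dicriticality hypothesis on $\fol_2$ is precisely what should guarantee that the number of separatrices and invariant surfaces through the origin remains finite, so that the topological model of the leaf space transported by $h$ is again of the completely integrable type rather than a Suzuki-like degeneration. Here I would lean on the reduction of singularities of the restricted foliations $\fol_i|_S$ (Seidenberg), combined with Klughertz's description of foliations on $(\C^2,0)$ carrying meromorphic first integrals \cite{Martine}, to rule out that the continuous first integral obtained from $F\circ h^{-1}$ integrates on some $S$ only to a \emph{meromorphic}, rather than holomorphic, object.

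The final and hardest step is to upgrade the resulting finite, closed, topologically product-like leaf structure of $\fol_2$ to two genuinely holomorphic first integrals. In dimension two this upgrade is the analytic heart of Mattei--Moussu: one constructs a holomorphic first integral by integrating a multivalued holonomy-invariant function and invokes finiteness to kill the monodromy. I expect the main obstacle to be the three-dimensional version of exactly this construction. The transverse structure is now two-dimensional, so one must produce a holomorphic map to $(\C^2,0)$ whose fibers are the leaves, simultaneously controlling the monodromy along the finitely many invariant surfaces and ensuring that no poles appear. The non-dicriticality hypothesis should be the mechanism that forbids poles, since it prevents the accumulation of separatrices that forces meromorphy in Suzuki's example; but turning this heuristic into a proof, and in particular handling the non-reduced or non-isolated part of the singular set after reduction, is where the genuine difficulty lies, and is the reason the statement is only conjectured here.
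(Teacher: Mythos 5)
This statement is not proved in the paper: it is stated as a conjecture, and the paper offers only a heuristic program towards it (desingularization results of Panazzolo, Cano--Roche--Spivakovsky and McQuillan--Panazzolo, non-vanishing of eigenvalues as in Lemma~\ref{lemmanonvanishing}, and Baum--Bott/Lehmann index formulas), not a proof. Your proposal is likewise not a proof --- you say so yourself at the end, since the decisive analytic step, upgrading the transported topological leaf structure to two genuinely \emph{holomorphic} first integrals, is exactly the part you leave open. So as it stands the proposal establishes nothing beyond the paper.

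More importantly, your opening reduction contains a concrete error that the paper itself is at pains to refute. You claim that if $\fol$ admits two independent holomorphic first integrals $f,g$, then every invariant surface $S$ inherits a non-constant holomorphic first integral by restricting $f$ or $g$, so that complete integrability automatically excludes dicritical invariant surfaces. This fails precisely when $S$ corresponds to a common irreducible factor of $f$ and $g$: then both restrictions vanish identically, and the first integral one can manufacture, namely a quotient of the form $f^{n}/g^{m}$ as in equation~(\ref{eq1}) of Section~\ref{construction}, is in general only \emph{meromorphic} on $S$ --- which is the mechanism producing dicritical invariant surfaces. The paper's own objects witness this: the completely integrable foliation $\fol$ of Theorem~\ref{contra-exemplo}, with first integrals $F_{\fol}=(y^2-x^3)z^2$ and $G_{\fol}=xz$, has the dicritical invariant plane $\{z=0\}$ on which the induced foliation is the Cerveau--Mattei foliation with meromorphic first integral $(y^2-x^3)/x^2$; and Theorem~\ref{teodicritical} asserts that, under generic assumptions, a completely integrable foliation with an isolated singularity \emph{always} carries a dicritical invariant surface. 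Consequently the non-dicriticality hypothesis is a genuine restriction on the completely integrable foliation $\fol_1$, not a freebie, and your symmetry reduction ``the working hypothesis is really the non-dicriticality of $\fol_2$'' collapses. Any serious attack on the conjecture must instead start from the characterization given in the Proposition of Section~\ref{dicritical} (non-dicriticality constrains the common-factor structure of $F,G$, so every invariant surface carries a holomorphic first integral), and then face the construction problem that neither you nor the paper resolves.
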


Given a foliation $\fol$, throughout this paper a (possibly singular) invariant surface over which the restriction of
$\fol$ defines a dicritical foliation, i.e. a foliation possessing infinitely many separatrices, will be called a
dicritical invariant surfaces.

Let us now come back to the role played by isolated singular points. This has to do with the interaction between
isolated singularities and the existence of dicritical invariant surfaces. Curiously,
modulo very mild generic assumptions, Theorem~\ref{teodicritical} below tells us that these surfaces always
exist provided that the foliation in question has an isolated singularity at the origin. In other words, in view
of the preceding conjecture, the ``correct" generalization of Mattei-Moussu's theorem may involve, in an intrinsic
way, foliations possessing curves of singular points. To make the discussion more accurate, let us now state
Theorem~\ref{teodicritical}.

\begin{teo}\label{teodicritical}
Let $\fol$ be a foliation by curves on $(\C^3,0)$ having an isolated singularity at the origin and admitting
two independent holomorphic first integrals. Suppose that $\tilf$, the transform of $\fol$ by the punctual
blow-up centered at the origin, has only isolated singularities which, in addition, are simple. Then $\fol$
possesses an invariant surface over which the induced foliation is dicritical.
\end{teo}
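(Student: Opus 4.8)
The plan is to read the obstruction off the homogeneous leading parts of the two first integrals and of a generator of $\fol$, and then to realise the dicritical surface as a common component of their zero sets. First I would fix a holomorphic vector field $X$ tangent to $\fol$ with $\mathrm{Sing}(X)=\{0\}$ and write $X=X_\nu+X_{\nu+1}+\cdots$ for its expansion into homogeneous parts, $X_\nu$ being the first nonzero one. Denoting by $f,g$ the two independent holomorphic first integrals and by $F,G$ their homogeneous leading parts (of degrees $d_f,d_g$), a comparison of lowest-degree terms in $X(f)=X(g)=0$ yields $X_\nu(F)=X_\nu(G)=0$: the leading parts are homogeneous first integrals of $X_\nu$. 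Since a nonzero multiple of the radial field $\sum x_i\partial_{x_i}$ admits no nonconstant homogeneous first integral, the relation $X_\nu(F)=0$ with $F\neq 0$ forces $X_\nu$ not to be a multiple of the radial field; hence the exceptional divisor $E\cong\p^2$ of the punctual blow-up is invariant by $\tilf$, and $\tilf|_E$ is the projective foliation induced by $X_\nu$ on $\p^2$.

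Next I would exploit the rational first integral this produces on $E$. The degree-zero rational function $\bar R=F^{d_g}/G^{d_f}$ is constant along the leaves of $\tilf|_E$. Here comes the decisive use of the hypothesis: were $\bar R$ nonconstant, $\tilf|_E$ would carry a nonconstant rational first integral and would thus be a pencil of invariant algebraic curves on $\p^2$; any two members of such a pencil meet, so base points exist, and at a base point $\tilf|_E$ is dicritical. But a dicritical singularity of the foliation induced on the invariant divisor $E$ is necessarily a non-simple singularity of $\tilf$ lying on $E$ (if $\tilf$ were regular there, so would be its restriction to the invariant $E$), contradicting the assumption that all singularities of $\tilf$ are simple. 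Therefore $\bar R\equiv\lambda$ is constant, i.e. $F^{d_g}=\lambda\,G^{d_f}$, equivalently $d_g\,\mathrm{div}(F)=d_f\,\mathrm{div}(G)$. Consequently $F$ and $G$ have the same reduced zero locus $\mathcal{C}=\{H=0\}\subset\p^2$, an invariant curve of $\tilf|_E$, so the tangent cones of the invariant surfaces $\{f=0\}$ and $\{g=0\}$ share the same support.

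It remains to turn this tangency into a genuine dicritical surface. The target is a common irreducible component $S$ of $\{f=0\}$ and $\{g=0\}$, equivalently a two-dimensional component of $\{f=g=0\}$; being a component of the invariant surface $\{f=0\}$, such an $S$ is automatically invariant. Granting $S$, let $k=\mathrm{ord}_S f$ and $l=\mathrm{ord}_S g$ be the vanishing orders along $S$. The meromorphic first integral $f^{l}/g^{k}$ of $\fol$ then has order zero along $S$, hence restricts to a meromorphic function $\Psi$ on $S$ that is a first integral of $\fol|_S$; since $f,g$ both vanish on $S$ while remaining independent off $S$, $\Psi$ is nonconstant and has the origin in its indeterminacy locus, so the level curves $\{\Psi=c\}$ provide infinitely many separatrices of $\fol|_S$ through $0$. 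Thus $\fol|_S$ is dicritical, as required. In the notation of Theorem~\ref{contra-exemplo} this $S$ is exactly the plane $\{z=0\}$, on which the residual integral is the Suzuki one.

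The hard part is producing the component $S$: equality of the reduced tangent cones does not by itself force $\{f=0\}$ and $\{g=0\}$ to share a component, and one must bring in both remaining hypotheses. Note that a purely homogeneous field never has an isolated singularity, so the singular cone of $X_\nu$ is positive-dimensional, and after blowing up the simple singularities of $\tilf$ on $E$ are precisely the finitely many reduced singular points of $\tilf|_E=\p X_\nu$ together with the behaviour of the higher-order jets along $\mathcal{C}$. I expect the construction of $S$ to proceed by analysing $\tilf$ along the invariant curve $\mathcal{C}\subset E$: using that $\mathrm{Sing}(\fol)=\{0\}$ is isolated and that $\tilf$ is simple, one should show that the strict transforms of $\{f=0\}$ and $\{g=0\}$ meet $E$ along a common component of $\mathcal{C}$ and glue to a common invariant surface, whose blow-down is the sought $S$. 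Upgrading equality of tangent cones to a genuine common invariant component, and checking that $\Psi$ is truly nonconstant, is where isolatedness and simplicity are indispensable, and is the main obstacle of the argument.
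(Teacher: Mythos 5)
Your argument breaks down at the step where you exclude the pencil case: the assertion that ``a dicritical singularity of the foliation induced on the invariant divisor $E$ is necessarily a non-simple singularity of $\tilf$'' is false for the paper's notion of simplicity, which only asks for at least one nonzero eigenvalue. At a base point of a pencil on $E$, the induced foliation $\tilf_E$ typically has two nonzero eigenvalues with positive rational ratio, so the corresponding singular point of $\tilf$ is simple regardless of the transverse eigenvalue; your parenthetical remark only shows that the base point is \emph{singular} for $\tilf$, not that it is degenerate. Concretely, take $X = x\,\partial/\partial x + 2y\,\partial/\partial y - z\,\partial/\partial z$ with first integrals $f = xz$ and $g = yz^2$. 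All hypotheses of Theorem~\ref{teodicritical} hold: $\tilf$ has exactly three singular points, with eigenvalue triples $(1,1,-2)$, $(-1,2,-3)$ and $(2,3,-1)$, all isolated and simple. Yet $\bar R = F^{d_g}/G^{d_f} = (xz)^3/(yz^2)^2 = x^3/(y^2z)$ is a \emph{nonconstant} rational first integral of $\tilf_E$, and the base points $[0:1:0]$ and $[0:0:1]$ of the pencil, where $\tilf_E$ has eigenvalues $(-1,-3)$ and $(2,3)$, are dicritical for $\tilf_E$ but perfectly simple for $\tilf$. So $\bar R$ need not be constant, the tangent cones $\{xz=0\}$ and $\{yz=0\}$ do not have the same support, and everything downstream collapses --- including the construction of the common component $S$, which you yourself acknowledge was still missing.

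The failure is structural, not a fixable detail: the paper proves exactly the opposite of what you try to establish. Proposition~\ref{propmerofirstintegral} shows that $\tilf_E$ \emph{always} admits a nonconstant meromorphic first integral (when $F^{d_g}/G^{d_f}$ is constant on $E$ one replaces $G$ by $F^l - \alpha G^k$ and iterates, with a termination argument resting on independence), so the pencil situation is the one that actually occurs, compatibly with simplicity. The paper then argues by contradiction with the assumption that no dicritical invariant surface exists, and the contradiction comes from index theory rather than from tangent cones: complete integrability forces the holonomy of separatrices at each singular point to be periodic, hence all eigenvalue ratios are rational; a sign analysis (using Lemma~\ref{lemmanumberseparatrices}, Lemma~\ref{lemmanonvanishing} and the exclusion of saddle-nodes) shows that, absent dicritical surfaces, the two eigenvalues along $E$ have the same sign at every singular point of $\tilf_E$, so each Baum-Bott index satisfies $BB(\tilf_E,p) = \lambda_1/\lambda_2 + \lambda_2/\lambda_1 + 2 \geq 4$; summing over the $k^2+k+1$ singular points then exceeds the Baum-Bott total $(k+2)^2$ when $k \geq 1$. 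Your final mechanism (a common component $S$ of $\{f=0\}$ and $\{g=0\}$ on which $f^{l}/g^{k}$ restricts to a meromorphic first integral) is essentially the observation of Section~\ref{construction} and the Proposition opening Section~\ref{dicritical}, but note that even there nonconstancy of the restricted integral requires the multiplicity conditions on the common factors, not merely the existence of a common component.
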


In the above statement, by a simple singularity, it is meant a singular point of $\fol$ with at least one
eigenvalue different from zero.

Naturally, in the course of the proof of Theorem~\ref{teodicritical}, we shall obtain some additional insight
into the structure of the set of foliations possessing dicritical invariant surfaces. The information collected
there might also be seen as some preliminary steps towards the conjecture stated above.

Along the same ideas, it might be useful to remove the ``generic" condition from the statement of Theorem~\ref{teodicritical}.
Let us then close this Introduction by sketching an argument that might be enough to show that every completely
integrable foliation about the origin of $\C^3$ should possess a dicritical invariant surface. This goes as follows.

Recall that the Seidenberg desingularization theorem for foliations on complex surfaces plays an important role on
the topological characterization of integrable foliations and, in particular, in showing the topological invariance
of integrable foliations. A completely faithful generalization of the Seidenberg result for foliations on $3$-manifolds
cannot exist, since some non-simple singularities are persistent under blow-ups (cf. \cite{P} for details). Nonetheless
final models on a desingularization process of foliations on $3$-manifolds have been described on different papers such
as \cite{C-R-S}, \cite{MQ-P}, \cite{P}. For example, in \cite{P} it is proved that we can obtain only simple singularities
after a finite sequence of ``permissible" blow-ups if weighted blow-ups are allowed. Concerning the standard blow-up Cano
et al. proved the existence of a finite sequence of ``permissible" blow-ups such that the proper transform of $\fol$ only
admits simple singularities with exception of some ``special" singular points of order at most~$2$, \cite{C-R-S}.

The idea to remove the generic condition from Theorem~\ref{teodicritical} is to first show that this type of
singular points cannot appear in the desingularization procedure of $\fol$ provided that $\fol$ is completely
integrable. This goes in the same direction of Lemma~\ref{lemmanonvanishing}~in Section~\ref{dicritical} and
can probably be quickly established by building in the material of \cite{MQ-P}. Once this result is available, we
shall have a foliation $\tilf$ possessing only simple singular points contained in a certain exceptional divisor.
Again, a simple singular point means a point at which the foliation has at least one eigenvalue different
from zero (note that we may have non-isolated singularities). In this situation, when the singularity is
isolated, Lemma~\ref{lemmanonvanishing} still applies to ensure it must have $3$~eigenvalues different from
zero. As to non-isolated singularities, a suitable variable of Lemma~\ref{lemmanonvanishing} should imply
that these singularities have $2$-eigenvalues different from zero. Summarizing, we shall have a foliation
$\tilf$ leaving invariant a more complicated (in particular not irreducible) exceptional divisor but whose
singularities are sufficiently ``well-behaved". Therefore it is likely that a careful study of the possible
arrangements carried out by means of Baum-Bott and Lehmann type index formulas (cf. \cite{lehmann}) may lead
to the desired conclusion.

\section{Construction of a counter example}\label{construction}

Let us begin by giving a formal definition of what is meant by independent holomorphic first integrals.

\begin{defnc}\label{defindependent}
Two non-constant holomorphic first integrals $F, \, G$ for a foliation $\fol$ are said to be independent if
there is no holomorphic function $f$ such that $F = f \circ G$.
\end{defnc}

Let $\fol$ be a foliation on $\left(\C^3,0\right)$ admitting two non-constant and independent holomorphic first
integrals $F$ and $G$. Consider the decomposition of $F$ and $G$ into irreducible factors
\begin{eqnarray*}
F & = & f_1^{m_1}\cdots f_k^{m_k} \\
G & = & g_1^{n_1}\cdots g_l^{n_l} \, \, \, .
\end{eqnarray*}
Suppose that $F$ and $G$ have no common irreducible factors, modulo multiplication by non-vanishing functions.
Then the restriction of $G$ to, for example, $\{f_1=0\}$ is a non-constant holomorphic first integral for the
restriction of $\Fcal$ to the same surface. In particular, the restriction of the foliation $\Fcal$ to $\{f_1
= 0\}$ viewed as a singular foliation defined on a (possibly singular) surface, admits a finitely many
separatrices. In this case, all leaves of $\Fcal|_{\{f_1 = 0\}}$ are ``fully identified" by $G$ in the sense
that the restriction of $G$ to $\{z = 0\}$ provides a non-constant holomorphic first integral for $\Fcal|_{\{f_1
= 0\}}$. Assume now that $f_1$ is a common irreducible factor for $F$ and $G$. Then the restrictions of both
$F$ and $G$ to $\{f_1 = 0\}$ vanish identically. In this case, the leaves of $\Fcal|_{\{f_1 = 0\}}$ cannot be
distinguished by either $F$ or $G$. Nonetheless, it is possible to obtain a non-constant first integral for the
restriction of $\Fcal$ to $\{f_1 = 0\}$ as a function of $F$ and $G$. In fact, the function
\begin{equation}\label{eq1}
\frac{F^{n_1}}{G^{m_1}}=\frac{f_2^{m_2n_1}\cdots f_k^{m_kn_1}}{g_2^{n_2m_1}\cdots g_l^{n_lm_1}}
\end{equation}
is a non-constant first integral of $\Fcal_{|_{\{f_1=0\}}}$. However, in general, this first integral is meromorphic
rather than holomorphic as shown by the simple example below.

\begin{example}
{\rm Consider the holomorphic functions $F=xz$ and $G=xz$ which clearly define two independent holomorphic first integrals
for the foliation associated to the vector field $x \partial /\partial x + y \partial /\partial y - z \partial
/\partial z$. Both $F$, $G$ vanish identically over the invariant manifold $\{x = 0\}$. Nonetheless, the function
$F/G = y/z$ provides a meromorphic first integral for the restriction of $\Fcal$ to this invariant manifold.}
\end{example}

In dimension~$2$, Suzuki provided in \cite{Suzuki2, Suzuki1} an example of a foliation possessing only analytic leaves
but not admitting a meromorphic first integral. Suzuki's motivation was to provide a counter example to a topological
criterion conjectured by Thom for the existence of meromorphic first integrals for foliations by curves on $(\C^2,0)$.
Later Cerveau and Mattei proved that Suzuki's foliation is topologically equivalent to a certain foliation that does
have a non-constant meromorphic first integral. Therefore the existence of meromorphic first integrals is not a
topological invariant contrasting with the case of holomorphic first integrals, cf. \cite{M-M}.

It was, in general, believed that the existence of two independent holomorphic first integrals for foliations on $(\C^3,0)$
should be a topologically invariant characteristic of a foliation. In other words, if two foliations are topologically
equivalent and one of them is completely integrable then so must be the other. Nonetheless, Theorem~\ref{contra-exemplo}
in the Introduction shows that this is not the case. This section is devoted to the proof of Theorem~\ref{contra-exemplo}.
More precisely, we are going to prove that the foliations $\fol, \, \mcf$ associated, respectively, to the vector fields
\begin{align*}
X &= 2xy \frac{\partial }{\partial x} + (x^3+2y^2) \frac{\partial }{\partial y} - 2yz \frac{\partial }{\partial z} \, , \\
Y &= x(x - 2y^2 - y) \frac{\partial }{\partial x} + y(x - y^2 - y) \frac{\partial }{\partial y} - z(x - y^2 - y) \frac{\partial }{\partial z} \, ,
\end{align*}
are topologically equivalent and that $\fol$ is completely integrable while the same does not hold for $\mcf$. The
definition of the foliations in question is itself inspired from the Suzuki and Mattei-Cerveau examples in the following
sense. The plane $\{z = 0\}$ is invariant by both $\fol, \, \mcf$ and, in fact, the restriction of $\fol$ (resp. $\mcf$)
to this invariant manifold coincides with the foliation provided by Cerveau-Mattei (resp. Suzuki). Furthermore they were
chosen so that the projection of each leaf of $\fol$ (resp. $\mcf$) by the projection map ${\rm pr}_2(x,y,z) = (x,y)$ is
still a leaf of $\fol$ (resp. $\mcf$) and, here, we have also added a sort of ``saddle behaviour" for their leaves with
respect to the third component. By ``saddle behaviour" it is meant that as the variable $x$ on the local coordinates of
a leaf decreases to zero, the variable $z$ increases monotonically to exit a fixed neighborhood of the origin.

The fact that the projection through ${\rm pr}_2$ of leaves of $\fol$ is still a leaf of $\fol$ implies that the meromorphic
first integral for the foliation provided by Cerveau-Mattei is also a meromorphic first integral for $\fol$. Let us denote it
by
\[
H_{\fol}(x,y,z) = \frac{y^2 - x^3}{x^2}
\]
However, in view of the observation made in the beginning of this section, this meromorphic first integral for $\fol$ can easily
be split into two independent holomorphic first integrals for $\fol$, namely
\begin{align*}
F_{\fol} (x,y,z) &= (y^2 - x^3)z^2 \, , \\
G_{\fol} (x,y,z) &= xz \, .
\end{align*}
Although the foliation considered by Suzuki does not have any meromorphic first integral, it admits a transcendent first
integral. In turn, this yields a transcendent first integral for the foliation $\mcf$ which will be denoted by
\[
H_{\mcf}(x,y,z) = \frac{x}{y} \rme^{\frac{y(y + 1)}{x}} \ .
\]
By a {\it transcendent first integral} it is meant a first integral that is holomorphic away from a subset of codimension~$1$
and does not admit a meromorphic extension to this subset. The foliation $\mcf$ possesses additional transcendent first
integrals that are independent of the previous one. For example, we can take
\[
G_{\mcf}(x,y,z) = -y \rme^{\frac{y}{x}} z \, .
\]
Naturally the existence of transcendent first integrals does not rule out the possibility of having holomorphic first
integrals as well. Therefore, besides showing that $\fol, \, \mcf$ are topologically equivalent, the proof of
Theorem~\ref{contra-exemplo} also requires the following lemma.

\begin{lema}
$\mcf$ does not admit two independent holomorphic first integrals.
\end{lema}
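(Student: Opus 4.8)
The plan is to determine all holomorphic first integrals of $\mcf$ explicitly and to show that they all depend functionally on a single one, so that two independent ones cannot coexist. Note first that $yz$ is itself a holomorphic first integral, since $Y(yz)=y(x-y^2-y)z-z(x-y^2-y)y=0$. The claim I would establish is the converse: \emph{every} holomorphic first integral of $\mcf$ has the form $\phi(yz)$ for some holomorphic function $\phi$ of one variable. The single substantial input is Suzuki's theorem: the restriction of $\mcf$ to the invariant plane $\{z=0\}$ is precisely Suzuki's foliation, which admits no non-constant meromorphic first integral. This is exactly the feature on which the construction was modeled.

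The key structural observation is that $Y$ is projectable under $\mathrm{pr}_2(x,y,z)=(x,y)$. Writing $b(x,y)=x-y^2-y$ and letting $V=x(x-2y^2-y)\frac{\partial}{\partial x}+y(x-y^2-y)\frac{\partial}{\partial y}$ be Suzuki's planar field, one has $Y=V-zb\,\frac{\partial}{\partial z}$. I would then expand a holomorphic first integral in powers of $z$, writing $F=\sum_k F_k(x,y)z^k$. Since $Y$ preserves the $z$-degree, the equation $Y(F)=0$ decouples into
\[
V(F_k)=k\,b\,F_k \qquad (k\ge 0).
\]
Now observe the numerical coincidence $V(y)=y(x-y^2-y)=y\,b$, whence $V(y^k)=k\,b\,y^k$; thus $y^k$ satisfies the very same relation as $F_k$. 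Consequently $V(F_k/y^k)=0$, so $F_k/y^k$ is a meromorphic first integral of Suzuki's foliation and is therefore constant, say $F_k=a_k y^k$. Reassembling, $F=\sum_k a_k y^k z^k=\phi(yz)$ with $\phi(s)=\sum_k a_k s^k$, whose convergence follows from that of $F$ by evaluating at $x=0$.

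From here the conclusion is immediate. If $F$ and $G$ were two holomorphic first integrals, then $F=\phi_1(yz)$ and $G=\phi_2(yz)$, so $dF\wedge dG=\phi_1'\,\phi_2'\,d(yz)\wedge d(yz)=0$ and $F,G$ are functionally dependent. Hence $\mcf$ cannot carry two independent holomorphic first integrals, which is the assertion of the lemma.

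The step I expect to be the crux is the reduction to Suzuki's non-integrability: one must check that, for each $k$, the quotient $F_k/y^k$ is a genuine meromorphic function near the origin of the $(x,y)$-plane and that it really defines a first integral of Suzuki's foliation, so that the absence of non-constant meromorphic first integrals forces it to be constant. This rests entirely on the projectability identity $Y=V-zb\,\frac{\partial}{\partial z}$ and on the coincidence $V(y)=y\,b$, both of which are direct computations but which encode the whole mechanism; the decoupling in $z$ and the reassembly into $\phi(yz)$ are routine. A minor point to settle is the precise reading of ``independent'' in Definition~\ref{defindependent}: since all holomorphic first integrals are functions of the single first integral $yz$, any two of them are functionally dependent, and in particular $\mcf$ fails to be completely integrable.
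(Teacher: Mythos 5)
Your proposal is correct, and it takes a genuinely different route from the paper. The paper argues by contradiction in three quick steps: two independent holomorphic first integrals $F,\,G$ of $\mcf$ must both vanish identically on $\{z=0\}$ (otherwise a restriction would give a non-constant holomorphic first integral of Suzuki's foliation, which has infinitely many separatrices); hence $F=z^kF_1$, $G=z^lG_1$ with $k,l\geq 1$; and then $F^l/G^k$ restricts to $\{z=0\}$ as a meromorphic first integral of Suzuki's foliation, \emph{asserted} to be non-constant because $F,G$ are independent, contradicting Suzuki's theorem. You instead classify \emph{all} holomorphic first integrals: the splitting $Y=V-zb\,\partial/\partial z$ with $b=x-y^2-y$ decouples $Y(F)=0$ into $V(F_k)=kbF_k$ on the coefficients of $F=\sum_kF_k(x,y)z^k$, and the identity $V(y^k)=kby^k$ makes each $F_k/y^k$ a meromorphic first integral of Suzuki's foliation, hence a constant $a_k$; so $F=\phi(yz)$ with $\phi(s)=\sum_ka_ks^k$, convergent by Cauchy estimates on $F(0,y,z)$. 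All of these computations check out. Both arguments rest on the same essential input (Suzuki's foliation admits no non-constant meromorphic first integral), but yours buys strictly more: a description of the full set of first integrals of $\mcf$, and, importantly, it bypasses the non-constancy assertion in the paper's proof, which is stated there without justification and is exactly the delicate phenomenon that forces the lengthy iterative argument of Proposition~\ref{propmerofirstintegral} in the analogous blown-up situation.

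One caveat, which you flag as ``a minor point'' but which deserves to be explicit. Your conclusion is that any two first integrals $\phi_1(yz),\,\phi_2(yz)$ satisfy $dF\wedge dG\equiv 0$, i.e.\ are functionally dependent. Under the \emph{literal} Definition~\ref{defindependent} (``there is no holomorphic $f$ with $F=f\circ G$'') this does not yet finish the lemma: for instance $F=(yz)^2$ and $G=(yz)^3$ are holomorphic first integrals of $\mcf$ and neither is a holomorphic function of the other, so they are ``independent'' in that literal sense, and the lemma would in fact fail as stated. This is a defect of the definition rather than of your argument; note that the paper's own proof breaks on the same pair, since there $F^3/G^2\equiv 1$ is constant. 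The mismatch disappears once ``independent'' is read either as $dF\wedge dG\not\equiv 0$ or as the non-existence of a common non-constant holomorphic first integral $H$ with $F=f\circ H$ and $G=g\circ H$; your classification (with $H=yz$) kills both notions simultaneously, so under any reading for which the lemma and Theorem~\ref{contra-exemplo} can hold, your proof is complete.
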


\begin{proof}
Assume for a contradiction that $\mcf$ admits two independent holomorphic first integrals $F, \, G$. Then both
$F, \, G$ vanish identically over $\{z = 0\}$ for otherwise the restriction of $F$ or $G$ to $\{z = 0\}$ would
provide a non-constant holomorphic first integral for the foliations induced on this invariant plane. This is
clearly impossible since the latter foliation possesses infinitely many separatrices.

Since $F, \, G$ vanish identically over $\{z = 0\}$, it follows that $F = z^k F_1$ and $G = z^l G_1$ for some integers
$k, \, l \geq 1$ and some holomorphic functions $F_1, \, G_1$ (not vanishing identically over $\{z = 0\}$). Since $F$
and $G$ are independent, it follows that $F^l/G^k$ provides a non-constant meromorphic first integral for $\mcf|_{\{z
= 0\}}$. However $\mcf|_{\{z = 0\}}$ coincides with Suzuki's foliation and therefore does not admit any non-constant
meromorphic first integral. The resulting contradiction proves the lemma.
\end{proof}

We are now able to prove Theorem~\ref{contra-exemplo}.

\begin{proof}[Proof of Theorem~\ref{contra-exemplo}]
Given the previous lemma, it remains to prove that $\fol, \, \mcf$ are topologically equivalent. To do this, let
us begin by revisiting the construction of a topological conjugacy between $\fol|_{\{z = 0\}}$ and $\mcf|_{\{z = 0\}}$
as carried out in~\cite{C-M}. In their proof, the $2$-dimensional punctual blow-up of the mentioned foliations at their
singular points was considered. The proper transform of $\fol|_{\{z = 0\}}$ (resp. $\mcf|_{\{z = 0\}}$) by this
blow-up map is a foliation still admitting a meromorphic (resp. transcedent) first integral. The leaves of the new
foliations intersect the exceptional divisor $\left(E \simeq \C\p(1) \right)$ transversely with exception of a single
leaf. This single leaf is tangent to the exceptional divisor at a single point and this point of tangency is of quadratic
type.

Consider the standard affine coordinates $(x,t)$ for the blow-up map $\pi: \widetilde{\C}^2 \rightarrow \C^2$,
where the exceptional divisor is identified with $\{x = 0\}$. The tangency point between $\fol|_{\{z = 0\}}$
(resp. $\mcf|_{\{z = 0\}}$) and the exceptional divisor is given by $t = 0$ (resp. $t = 1$). Let $U_1$ (resp.
$U_0$) be a small neighborhood of the point $(0,1) \in E$ (resp. $(0,0) \in E$) in $\widetilde{\C}^2$. To prove
that $\fol|_{\{z = 0\}}, \, \mcf|_{\{z = 0\}}$ are topologically equivalent, Cerveau and Mattei first constructed
a homeomorphism from $U_1$ into $U_0$ taking the leaves of $\mcf|_{\{z = 0\}}$ to the leaves of $\fol|_{\{z = 0\}}$
on the corresponding open sets. This homeomorphism was subsequently shown to admit an extension (as homeomorphism)
to a neighborhood of the entire exceptional divisor. Note that two foliations are topologically equivalent in a
neighborhood of the origin provided that the correspondent blown-up foliations are topologically equivalent in a
neighborhood of the exceptional divisor.

In our case, we need to construct a topological conjugacy between $\fol, \, \mcf$ on a neighborhood of the origin,
i.e. a homeomorphism defined about the origin of $\C^3$ and taking leaves of $\fol$ to leaves of $\mcf$. For this,
let us first note that the singular sets of $\fol, \, \mcf$ are not reduced to an isolated point. In fact, both sets
coincide with the $z$-axis. Being the $z$-axis invariant for them, it is then natural to consider the blow-up along
the curve of singular points instead of the punctual blow-up at the origin. Let us now proceed to the details.

Consider the standard affine coordinates $(x,t,z)$ (resp. $(u,y,z)$) for the blow-up map centered at the $z$-axis
and given by $\pi_z(x, t, z) = (x, tx, z)$ (resp. $\pi_z(u, y, z) = (uy, y, z)$). We denote by $\widetilde{\C}^3$
the corresponding blow-up of $\C^3$ and by $E$ the resulting exceptional divisor. Note that the pre-image in
$\widetilde{\C}^3$ of a relatively compact neighborhood of the origin is naturally isomorphic to $\widetilde{\C^2}
\times \mathbb{D}$, where $\widetilde{\C^2}$ denotes the blow-up of $\C^2$ by the punctual blow-up at the origin
and $\mathbb{D}$ stands for the unit disc of $\C$. In the affine coordinates $(x,t,z)$ the disc $\mathbb{D}$ is
naturally equipped with the coordinate $z$.

Denote by $\tilf$ (resp. $\tf$) the transform of $\fol$ (resp. $\mcf$) by this blow-up map. The resulting exceptional
divisor $E$ is now invariant by $\tilf, \, \tf$, although both foliations possess leaves intersecting $E$ transversely.
In fact, all leaves contained in the invariant plane $\{z = 0\}$ intersect $E$ transversely with exception of a single
leaf, which is tangent to $E$. The tangency point of $\tilf$ (resp. $\tf$) with $E$ is given, in the coordinates $(x,t,z)$,
by $(0,0,0)$ (resp. $(0,1,0)$).

Let $V_1$ denote a small neighborhood of the point $(0,1,0)$. We begin by presenting a homeomorphism from $V_1$ into
$V_0$, a small neighborhood of $(0,0,0)$, taking the leaves of $\tf$ to the leaves of $\tilf$. It will later be shown
that this homeomorphism admit an extension (as homeomorphism) to a neighborhood of the ``entire" exceptional divisor.

The foliation $\fol$ is completely characterized by the two independent holomorphic first integrals $F_{\fol}, \, G_{\fol}$
or, equivalently, by the two independent meromorphic first integrals $G_{\fol}, \, H_{\fol}$. Thus its transform $\tilf$
must be completely characterized by the two independent functions
\begin{eqnarray*}
\widetilde{G}_{\fol} (x, t, z) & = & G_{\fol} \circ \pi_z (x, t, z) = xz \\
\widetilde{H}_{\fol} (x, t, z) & = & H_{\fol} \circ \pi_z (x, t, z) = t^2 - x  \, .
\end{eqnarray*}
Analogously, the transformed foliation $\tf$ also admits two independent first integrals, namely
\begin{eqnarray*}
\widetilde{G}_{\mcf} (x, t, z) & = & -tx \rme^{t} z \\
\widetilde{H}_{\mcf} (x, t, z) & = & \frac{1}{t} \rme^{t^2x + t}  \ .
\end{eqnarray*}

Following~\cite{C-M}, let $\varphi : (\widetilde{\C}^2, (0,1)) \rightarrow (\widetilde{\C}^2, (0,0))$, $\varphi =
(\varphi_1, \varphi_2)$, be the homeomorphism, in fact diffeomorhism, given by
\begin{eqnarray*}
\varphi_1 (x, t) & = & \widetilde{H}_{\mcf} (0, t) - \widetilde{H}_{\mcf} (x, t) \\
\varphi_2 (x, t) & = & V (t) \ ,
\end{eqnarray*}
where $V(t)$ is a square root of $\widetilde{H}_{\mcf} (0, t) - \widetilde{H}_{\mcf} (0, 1)$, i.e. where $V$ satisfies
$(V(T))^2 = \widetilde{H}_{\mcf} (0, t) - \widetilde{H}_{\mcf} (0, 1)$. Geometrically, this homeomorphism sends straight
lines through the origin into straight lines through the origin, being the correspondence on each line made through
$\varphi_1$ noticing that, for a fixed line, distinct $x$ corresponds to distinct leaves. Now identify $\widetilde{\C}^2$
to $\widetilde{\C}^2 \times \{0\} \subseteq \widetilde{\C}^2 \times \mathbb{D} \simeq \widetilde{\C}^3$. With this
identification, $\varphi$ naturally satisfies $\widetilde{H}_{\fol} \circ \varphi = \widetilde{H}_{\mcf}$. In other
words, the homeomorphism $\varphi$ take leaves of $\tf|_{\{z = 0\}}$ to leaves of $\tilf|_{\{z = 0\}}$. Since
$\widetilde{H}_{\fol}$ and $\widetilde{H}_{\mcf}$ are still independent first integrals for $\tilf, \, \tf$,
respectively, to construct a homeomorphism $\Phi : V_1 \rightarrow V_0$ taking leaves of $\tf$ to leaves of
$\tilf$, $\Phi$ is simply asked to satisfy the following three conditions:
\begin{itemize}
\item[(a)] The restriction of $\Phi$ to $\{z = 0\}$ coincides with $\varphi$
\item[(b)] The first two components of $\Phi$ do not depend on $z$
\item[(c)] $\widetilde{G}_{\fol} \circ \Phi = \widetilde{G}_{\mcf}$
\end{itemize}
where $\Phi = (\Phi_1, \Phi_2, \Phi_3)$. Conditions (a) and (b) imply that $\Phi_1, \, \Phi_2$ coincide with $\varphi_1,
\, \varphi_2$, respectively. Therefore $\widetilde{H}_{\fol} \circ \Phi = \widetilde{H}_{\mcf}$ everywhere.

As already mentioned, $\widetilde{H}_{\fol} \circ \Phi = \widetilde{H}_{\mcf}$ assuming $\Phi_1 = \varphi_1$ and $\Phi_2 =
\varphi_2$. It remains to check that there exists a continuous map $\Phi_3$ such that $\widetilde{G}_{\fol} \circ \Phi =
\widetilde{G}_{\mcf}$ as well. In particular, $\Phi_3$ must admit a continuous extension to the part of the exceptional
divisor contained on $V_1$. Since $\widetilde{G}_{\fol} (x,t,z) = xz$, the product between $\Phi_1$ and $\Phi_3$ must
coincide with $\widetilde{G}_{\mcf}$. In other words, $\Phi_3$ satisfies the equation
\[
(\widetilde{H}_{\mcf}(0, t) - \widetilde{H}_{\mcf}(x, t)) \Phi_3 = - t x \rme^{t}z \ .
\]
Therefore
\begin{equation}\label{phi3_def}
\Phi_3\left(x,t,z\right) = -\frac{t^2x}{1-\rme^{t^2x}}z \ .
\end{equation}
The function $\Phi_3$ is clearly continuous in $V_1 \setminus E$. Furthermore $\Phi_3$ is such that
\[
\lim_{x \rightarrow 0} \phi_3 (x, t, z) = \lim_{x \rightarrow 0} -\frac{t^2 x}{1 - \rme^{t^2 x}} z =
\lim_{x \rightarrow 0} -\frac{t^2}{- t^2 \rme^{t^2 x}} z = z \ ,
\]
by the l'Hopital rule. This means that $\Phi_3$ admits a continuous extension to $V_1 \cap E$. The extension is, in fact,
holomorphic as follows from Riemann extension theorem.

Note that the geometric conditions used in the construction of $\fol$ and $\mcf$ are shared by many other foliations. Our
particular choice of $\fol$ and $\mcf$ was made so as to have the following extra advantage: the above mentioned extension
of $\Phi_3$ to the exceptional divisor $E$ coincides with the identity in $E$.

Let us now show that the homeomorphism $\Phi$ can be extended to a homeomorphism defined in a neighborhood of a compact
part of the exceptional divisor.

Denote by $E_0$ the pre-image of the origin by $\pi_z$, i.e. $E_0 = \pi_z^{-1}(0)$ so that $E_0$ is isomorphic to
$\C\p(1)$. Fix $r > 0$ sufficiently small such that $\Phi$ is continuous in a small neighborhood $W$ of $D_{\mcf,r}$,
where $D_{\mcf,r} \subseteq E_0$ represents the disc of radius $r$ centered at the tangency point $t = 1$. Denote by
$W^{\circ}$ a neighborhood of $S = E_0 \backslash D_{\mcf,r^{\prime}}$, where $0 < r^{\prime} < r$. The restriction
of $\tf$ to $W^{\circ}$ can be viewed as a fibration over $L = E_0 \cap W^{\circ}$
\begin{equation*}
\psi : W^{\circ} \rightarrow L
\end{equation*}
where the projection $\psi$ is defined as follows. Since the leaves of $\tf$ contained in the invariant plane $\{z = 0\}$
are transverse to $E$ on $W^{\circ}$, to every point $a \in W^{\circ} \cap \{z = 0\}$ we associate the unique intersection
point $\psi (a)$ of the leaf through $a$ with $L$. Consider now a point $a \in W^{\circ} \setminus \{z = 0\}$. The leaf
through $a$ does not intersect the exceptional divisor. Nonetheless, it can be projected, through the projection map
${\rm pr}_2$, to a leaf of $\tf|_{\{z = 0\}}$ which is, in turn, transverse to $L$. Thus, assuming that in local
coordinates $a$ is given by $(x_a, t_a, z_a)$, we define the fiber $\psi (a)$ as
\begin{displaymath}
\psi(a) = \left\{ \begin{array}{ll}
t_a  & \textrm{, if $x_a = 0$}\\
\psi(x_a, t_a, 0)  &  \textrm{, otherwise.}
\end{array} \right.
\end{displaymath}
In particular, it follows from the definition of $\psi$ that $(x_a, t_a, z_a)$ and $(x_a, t_a, 0)$ belong to the same
fiber of the fibration in question.

Consider, for simplicity, the change of variable $T = t - 1$ which, basically, ``moves" the tangency point between $\mcf$
and $E$ to the origin. In this coordinate, the disc $D_{\mcf, r}$ is characterized by the condition $|T| < r$. Given $\rho,
\, \varepsilon > 0$, consider the loop in $\{z = 0\}$ defined by
\begin{displaymath}
\gamma : \,  \left\{ \begin{array}{ll}
x(\theta) = \rho \rme^{-i \theta} \\
T(\theta) = \varepsilon \rme^{i \theta} \, .
\end{array} \right.
\end{displaymath}
for $\theta \in [0, 2\pi]$. In coordinates $u = y - x$ and $T^{\prime} = 1/T$, this same loop becomes
\begin{displaymath}
\overline{\gamma} : \,  \left\{ \begin{array}{llll}
u & = & \rho \varepsilon \\
|T^{\prime}|  & = & \frac{1}{\varepsilon} \, .
\end{array} \right.
\end{displaymath}

Fix $\rho, \, \varepsilon$ so that the image of $\gamma$ is contained in $W \cap W^{\circ}$ (for example, $\varepsilon$
can be chosen so that $r^{\prime} < \varepsilon < r$). Then consider on $E_0 (\simeq \C\p(1))$ the image of $\gamma$
through the fibration map $\psi$, $\Gamma = \psi \circ \gamma$. Also denote by $\overline{\Gamma}$ be the image of
$\Gamma$ by the change of coordinates mentioned above ($T = t - 1$). Clearly $\Gamma$ (resp. $\overline{\Gamma}$) is
a loop of index~$1$ around $T = 0$ (resp. $T^{\prime} = 0$). Furthermore, from the expression of $T(\theta)$ on the
definition of $\gamma$, it follows that $\overline{\Gamma}$ corresponds to the boundary of two complementary open
discs in $\C\p(1)$, namely $D_{\varepsilon} =  \{T : |T| < \varepsilon\}$ and $D^{\prime}_{\varepsilon} =  \{T^{\prime}
: |T^{\prime}| < 1/\varepsilon\}$. It is also clear that $D_{\varepsilon} \subseteq W$ while $D^{\prime}_{\varepsilon}
\subseteq W^{\circ}$.

Now consider the compact set
\begin{equation*}
K = \psi^{-1} (\overline{D^{\prime}}_\varepsilon) \cap \{ |y| < \rho \varepsilon \, ,|z| < \varepsilon \} \ .
\end{equation*}
The restriction of $\psi$ to $K$, denoted by $\psi^{\prime}$, is a fibration with base $\overline{D^{\prime}}_\varepsilon$
and whose fibre is isomorphic to $\overline{D}_{\rho \varepsilon} \times \overline{D}_\varepsilon$. Since the base of the
fibration is a disc, and then contractible, we conclude that this fibration is topologically trivial. In particular, the
diagram below commutes.
\[
\renewcommand{\arraystretch}{1.3}
\begin{array}[c]{ccccc}
K & \stackrel{\xi}{\longrightarrow} & \overline{D^{\prime}}_\varepsilon \times \overline{D}_{\rho \varepsilon}
\times \overline{D}_{\varepsilon}\\
& \stackrel{\psi^{\prime}}{\searrow} & \downarrow^{{\rm pr}_1}\\
&  & \overline{D^{\prime}}_\varepsilon
\end{array}
\]
Note that ${\rm pr}_1$ denotes, in local coordinates, the projection to the first component (i.e. ${\rm pr}_1(T,Y,Z) = T$)
and $\xi = (\psi^{\prime}, Y, z)$.

A fibration following the leaves of $\tf$ has just been defined. A similar fibration following now the leaves of
$\tilf$ can also be defined. Let $D_{\fol,r}$ (resp. $D_{\fol,r^{\prime}}$) be the image of $D_{\mcf,r}$ (resp.
$D_{\mcf,r^{\prime}}$) by the conjugating homeomorphism $\Phi$ described above. Those sets are naturally contained
in $E_0$ and they are both isomorphic to discs. Let $V$ be the image of $W$ by $\Phi$. Fix now a small neighborhood
of $E_0 \setminus D_{\fol,r^{\prime}}$ and denote it by $V^{\circ}$. Naturally $V^{\circ}$ can be chosen so that $V^{\circ}
\cap V$ corresponds to $\Phi(W^{\circ} \cap W)$. Then, setting $I = E_0 \cap V^{\circ}$, it follows that $\tilf$ can be
viewed as a fibration over $I$
\[
\psi_1 : W_1^0 \rightarrow I \, .
\]
The fibration $\psi_1$ is defined in perfect analogy with the definition of $\psi$.

Let $\gamma_1$ (resp. $\overline{\gamma}_1$, $\Gamma_1$, $\overline{\Gamma}_1$) be the image of $\gamma$ (resp.
$\overline{\gamma}$, $\Gamma$, $\overline{\Gamma}$) by $\Phi$. Now $\overline{\Gamma}_1$ corresponds to the boundary
of two complementary open sets that are isomorphic to discs. Let us denote by $D_{1,\varepsilon}$ (resp.
$D^{\prime}_{1,\varepsilon}$) the ``disc" containing $t=0$ (resp. $t = \infty$).

Finally, let $K_1$ be a compact neighborhood of $D^{\prime}_{1,\varepsilon}$ so that
\[
K_1 \cap \psi_1^{-1} (\overline{\Gamma}_1) = \overline{U}_1 \cap \psi_1^{-1} (\overline{\Gamma}_1) \, ,
\]
where $U_1 = \Phi(U)$. Furthermore $K_1$ is chosen as a compact neighborhood of $D^{\prime}_{1,\varepsilon}$ such that
the restriction of $\psi_1$ to $K_1$ is still a fibration with fiber isomorphic to $\overline{D}_{\rho \varepsilon}
\times \overline{D}_{\varepsilon}$. This restriction is again a topologically trivial fibration since the base is
isomorphic to a disc.

The homeomorphism $\Phi$ induces a bundle morphism $\Phi^0$ between $(\psi^{\prime})^{-1}(\overline{\Gamma})$ and
$(\psi_1^{\prime})^{-1}(\overline{\Gamma}_1)$. To conclude that $\tilf$ and $\tf$ are topologically equivalent, it
suffices to show that this homeomorphisms admits a fibration-preserving continuous extension.

Both $\overline{D}^{\prime}_\varepsilon$ and $\overline{D}^{\prime}_{1,\varepsilon}$ are homeomorphic to the unit
disc $\D$. Denote by $\lambda$ (resp. $\lambda_1$) a homeomorphism from $\overline{D}^{\prime}_\varepsilon$ (resp.
$\overline{D}^{\prime}_{1,\varepsilon}$) onto $\D$. The homeomorphism $\Phi^0$ induces a homeomorphism $\overline{v}$
between the boundary of the unit discs, $\overline{v} : \partial \D \rightarrow \partial \D$. In turn, $\overline{v}$
can naturally be extended to $\D$ over the radial lines, i.e. by sending the line of angle $\theta$ onto the line of
angle $\overline{v}(\theta)$. In polar coordinates, this extension is given by
\begin{equation*}
\overline{V}(r, \theta) = (r, \overline{v}(\theta)) \, .
\end{equation*}

Let us now consider $V = \dl_1^{-1} \circ \overline{V} \circ \dl$. It represents a continuous extension of $\Phi^0$
to $\overline{D}_{\varepsilon}^{\prime}$ and this extension yields the following commutative diagram
\[
\renewcommand{\arraystretch}{1.3}
\begin{array}[c]{ccccc}
\overline{D}^{\prime}_{\varepsilon} & \stackrel{V}{\longrightarrow} & \overline{D}^{\prime}_{1,\varepsilon} \\
\downarrow^{\lambda} & & \downarrow^{\lambda_1} \\
\D & \stackrel{\overline{V}}{\longrightarrow} & \D
\end{array}
\]
In other words, it has been established a continuous correspondence between the bases of the trivial fibrations $\psi$
and $\psi_1$. The next step is to set up a correspondence between the fibers of the fibrations in question. The bundle
morphism $\Phi^0$ gives us, already, a correspondence between the fibers over the boundaries of the base, i.e. over
$\partial D^{\prime}_{\varepsilon}$ and $\partial D^{\prime}_{1,\varepsilon}$, leading to the commutative diagram
below
\[
\renewcommand{\arraystretch}{1.3}
\begin{array}[c]{ccccc}
\partial \overline{D}^{\prime}_{\varepsilon} \times \overline{D}_{\rho \varepsilon} \times \overline{D}_\varepsilon
& \stackrel{\Phi^0}{\longrightarrow}
& \partial \overline{D}^{\prime}_{1,\varepsilon} \times \overline{D}_{\rho \varepsilon} \times \overline{D}_\varepsilon \\
\downarrow & & \downarrow \\
\partial \D & \stackrel{\overline{v}}{\longrightarrow} & \partial \D
\end{array}
\]
Let ${\rm Diff}_0 (\overline{D}_{\rho \varepsilon} \times \overline{D}_\varepsilon,0)$ denote the group of homeomorphisms of
$\overline{D}_{\rho \varepsilon} \times \overline{D}_\varepsilon$ preserving the origin. Consider also the parametrization
of $\partial \overline{D}^{\prime}_{\varepsilon}$ by the polar coordinate $\theta$. Since the fibers are all isomorphic to
$\overline{D}_{\rho \varepsilon} \times \overline{D}_\varepsilon$, the bundle morphism $\Phi^0$ induces a continuous
$1$-parameter family $\{h_{\theta}\}_{\theta \in [0, 2\pi]}$ of elements of ${\rm Diff}_0 (\overline{D}_{\rho \varepsilon}
\times \overline{D}_\varepsilon,0)$.

\begin{lema}\label{homotopiatrivial}
The homotopy class of $\{h_{\theta}\}_{\theta \in [0, 2\pi]}$ in ${\rm Diff}_0 (\overline{D}_{\rho \varepsilon} \times
\overline{D}_\varepsilon,0)$ is trivial.
\end{lema}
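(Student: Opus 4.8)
The plan is to use the explicit triangular form of the conjugacy $\Phi = (\Phi_1,\Phi_2,\Phi_3)$ to describe each $h_\theta$ concretely, and then to decompose the homotopy class of the loop into two computable pieces. Recall that $\Phi_1,\Phi_2$ depend only on $(x,t)$, whereas
\[
\Phi_3(x,t,z) = -\frac{t^2x}{1-\rme^{t^2x}}\,z = m(x,t)\,z
\]
is linear in $z$, with coefficient $m(x,t)$ nonvanishing and extending to the value $1$ over $E$. Writing the fibre coordinates of both fibrations as $(y,z)\in\overline{D}_{\rho\varepsilon}\times\overline{D}_\varepsilon$, and using that the product trivialization $\xi=(\psi',Y,z)$ and its analogue on the $\tilf$-side record the ambient coordinate $z$ itself, a direct evaluation of $h_\theta = \xi_1\circ\Phi^0\circ\xi^{-1}$ on the fibre over the point of parameter $\theta$ of $\overline{\Gamma}$ exhibits each $h_\theta$ as a skew product
\[
h_\theta(y,z) = \bigl(a_\theta(y),\, m(x,t)\,z\bigr),
\]
where $a_\theta\in\Diff_0(\overline{D}_{\rho\varepsilon},0)$ is independent of $z$, and the $z$-action is multiplication by $m$ evaluated at the point of the leaf determined by $(\theta,y)$. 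Since such skew products form a subgroup of $\Diff_0(\overline{D}_{\rho\varepsilon}\times\overline{D}_\varepsilon,0)$ stable under composition, it is enough to contract $\{h_\theta\}$ inside this subgroup, treating the base component $\{a_\theta\}$ and the multiplier component separately.

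For the $z$-direction the relevant invariant is the class in $\pi_1(\C^*)=\Z$ of the loop $\theta\mapsto m(x,t)$, and here the design of $\fol$ and $\mcf$ pays off. Along $\overline{\Gamma}$ the coordinate $x$ has size $\rho$ while $t$ stays near $1$, so by the l'Hopital computation already carried out the coefficient $m$ remains inside an arbitrarily small neighbourhood of $1\in\C^*$ once $\rho,\varepsilon$ are taken small. Consequently $\theta\mapsto m(x,t)$ has winding number zero, and the straight-line homotopy $b_s = (1-s)m + s$ stays nonvanishing, contracting the multiplier component to the identity while keeping every intermediate map a homeomorphism of the bidisc fixing the origin. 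This is precisely the step that would fail, and with it the extension of the whole conjugacy, for a coefficient winding nontrivially around $0$.

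It then remains to contract the base loop $\{a_\theta\}$ in $\Diff_0(\overline{D}_{\rho\varepsilon},0)$. By condition~(a) this loop is the restriction of $\{h_\theta\}$ to $\{z=0\}$, hence coincides with the two–dimensional monodromy produced around $\overline{\Gamma}$ by the Cerveau–Mattei map $\varphi$. Since in \cite{C-M} this homeomorphism was shown to extend over the entire exceptional divisor of the surface, the corresponding loop of homeomorphisms of a transverse disc is null-homotopic; reusing this, $\{a_\theta\}$ is contractible to the identity. Concatenating the two contractions gives a null-homotopy of $\{h_\theta\}$ inside $\Diff_0(\overline{D}_{\rho\varepsilon}\times\overline{D}_\varepsilon,0)$, which is the assertion of the lemma.

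The point I expect to require the most care is not either individual computation but the verification that the homotopy class is faithfully detected by the pair (base loop, multiplier winding). One must check that the skew-product description of $h_\theta$ holds coherently for all $\theta$ — that inverting the fibrations to read off $a_\theta$ and $m$ is continuous in $\theta$ and compatible with the base correspondence $V$ — and that contracting the two factors in succession yields maps that remain homeomorphisms of the bidisc fixing $0$ throughout. Granting this bookkeeping, the two contractions are independent and combine without interference.
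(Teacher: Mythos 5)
Your proposal is correct and follows essentially the same route as the paper: both proofs rest on the skew-product form $h_\theta(u,z)=(h_{1,\theta}(u),\, m\cdot z)$ forced by the ${\rm pr}_2$-invariance of the leaves, on the fact (quoted from Cerveau--Mattei) that the base family $\{h_{1,\theta}\}$ is null-homotopic in ${\rm Diff}_0(\overline{D}_{\rho\varepsilon},0)$, and on the non-vanishing (indeed proximity to $1$) of the $z$-multiplier coming from the explicit formula for $\Phi_3$. The only difference is organizational: you contract the multiplier first by a straight-line homotopy (justified by winding number zero) and then contract the base, whereas the paper performs both at once via the single formula
\[
\Xi_{\theta}(u,z,s) = \Bigl( \aleph_{\theta}(u,s)\, ,\, -\frac{(1 - \rme^{is\theta})^2 \aleph_{\theta}(u,s)}{1 - \rme^{(1 - \rme^{is\theta})^2 \aleph_{\theta}(u,s)}}\, z \Bigr) \, ,
\]
in which the multiplier is slaved to the base contraction $\aleph_\theta$ through the expression for $\Phi_3$.
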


Before proving this lemma, note that $h = \Phi^0$ admits a homeomorphic extension to $\overline{D}^{\prime}_{\varepsilon}
\times \overline{D}_{\rho \varepsilon} \times \overline{D}_{\varepsilon}$. To check this, assume that the homotopy class
of $\{h_{\theta}\}_{\theta \in [0, 2\pi]}$ is trivial. Fix $\theta \in [0,2\pi]$ and let
\[
\Xi_{\theta} : \overline{D}_{\rho \varepsilon} \times \overline{D}_{\varepsilon} \times [0,1] \rightarrow
\overline{D}_{1,\rho \varepsilon} \times \overline{D}_{1,\varepsilon}
\]
be the homotopy map joining $h_0$ to $h_{\theta}$, i.e. let $\Xi_{\theta}$ be the map such that $\Xi(u,z,0) = h_0(u,z)$,
$\Xi(u,z,1) = h_{\theta}(u,z)$ and $\Xi(0,0,s) = (0,0)$ for all $s \in [0,1]$. Then the map defined by
\[
H \left((r,\theta),u,z \right) = \left( \overline{V}(r, \theta), \Xi_{\theta}(u,z,r) \right) \, ,
\]
is a continuous extension of $h$ to $\overline{D}^{\prime}_{\varepsilon} \times \overline{D}_{\rho \varepsilon} \times
\overline{D}_{\varepsilon}$. This homeomorphism induces a natural homeomorphism from $K$ onto $K_1$ and this ends the
proof of the theorem.
\end{proof}

It remains however to prove Lemma~\ref{homotopiatrivial}.

\begin{proof}[Proof of Lemma~\ref{homotopiatrivial}]
The elements $h_{\theta}$, $\theta \in [0, 2\pi]$, correspond to the restriction of $h$ to the fiber over $\theta \in \partial
\overline{D}^{\prime}_{\varepsilon}$. More precisely,
\[
h_{\theta} = h|_{(\theta \times \overline{D}_{\rho \varepsilon} \times \overline{D}_\varepsilon)} \, .
\]
In particular, $h_{\theta} (0) = 0$ for all $\theta \in [0, 2\pi]$. Since the projection of leaves of $\fol, \, \mcf$
through the projection map ${\rm pr}_2$ are still leaves of $\fol, \, \mcf$, respectively, it follows that $h_{\theta}$
has the form
\[
h_{\theta}(u,z) = (h_{1,\theta}(u), h_{2,\theta}(u,z)) \, ,
\]
where $h_{1,\theta}$ (resp. $h_{2,\theta}$) represents the restriction of $\Phi_1$ (resp. $\Phi_3$) to a fixed $\theta
\in \partial \overline{D}^{\prime}_{\varepsilon}$. In particular, $\{h_{1,\theta}\}$ coincides with the correspondent
$1$-parameter family of homeomorphism constructed in \cite{C-M}. Furthermore, this $1$-parameter family has trivial
homotopy class in ${\rm Diff}_0 (\overline{D}_{\rho \varepsilon},0)$ (cf. \cite{C-M}). Let $\aleph_{\theta}$ denote
a homotopy map joining $h_{1,0}$ to $h_{1,\theta}$, i.e. let $\aleph_{\theta}$ be a map such that $\aleph(u,0) =
h_{1,0}(u)$, $\aleph(u,1) = h_{1,\theta}(u)$ and $\aleph(0,s) = 0$ for all $s \in [0,1]$. Then a homotopy map
$\Xi_{\theta}$ joining $h_0$ to $h_{\theta}$ can easily be constructed. In fact, recalling the expression of $\Phi_3$,
$\Xi_{\theta}$ can be given by
\[
\Xi_{\theta}(u,z,s) = \left( \aleph_{\theta}(u,s) , -\frac{(1 - e^{is\theta})^2 \aleph_{\theta}(u,s)}{1 - e^{(1 -
e^{is\theta})^2 \aleph_{\theta}(u,s)}} \, z \right)
\]
\end{proof}

\section{On dicritical invariant surfaces}\label{dicritical}

Note that both foliations $\fol$ and $\mcf$ considered in the preceding possess an invariant surface over which the
induced foliation is dicritical, i.e. they share a common dicritical invariant surface. Furthermore, the singular
sets of $\fol$ and $\mcf$ are not reduced to isolated singular points. Examples of vector fields admitting two
independent holomorphic first integrals and such that their associated foliations possess either
\begin{itemize}
\item[(a)] an isolated singular point and dicritical invariant surfaces or
\item[(b)] a curve of singular points but no dicritical invariant surfaces
\end{itemize}
can easily be constructed. For example, the vector field $X = x \partial /\partial x + y \partial /\partial y -
z \partial /\partial z$ induces a foliation of type (a). The origin is the unique singular point and the foliation
induced on the invariant plane $\{z = 0\}$ is dicritical. Moreover it admits two independent holomorphic first
integrals, namely $F(x,y,z) = xz$ and $G(x,y,z) = yz$. In turn, the vector field $Y = x \partial /\partial x +
y \partial /\partial y$, which admits $F(x,y,z) = xy$ and $G(x,y,z) = z$ as holomorphic first integrals, induces
a foliation of type (b). In this section, it will be proved that under ``generic" conditions there is no completely
integrable foliation with an isolated singular point and without dicritical invariant surfaces. This is the contents
of Theorem~\ref{teodicritical}.

Before proving Theorem~\ref{teodicritical}, some comments should be made. First of all, it should be noted that the
existence of dicritical invariant surfaces depends on the existence of non-trivial common factors between the
decomposition into irreducible factors of two independent holomorphic first integrals. In fact, let $F$ and $G$
be two independent holomorphic first integrals for a foliation $\fol$ and let $f_i, \, g_j$ be their irreducible
factors. The leaves accumulating at the origin and, in particular, the separatrices of $\fol$, are all contained in
the union of the $2$-dimensional invariant varieties given by $\{f_i = 0\}$ and by $\{g_j = 0\}$. Assume that $F, \,
G$ admit only trivial common factors, modulo multiplication by a non-vanishing function. Then the restriction of $F$
(resp. $G$) to each irredubible component $\{g_j = 0\}$ (resp. $\{f_i = 0\}$) provides a non-constant holomorphic
first integral for the restriction of $\fol$ to the same surfaces. Therefore, the restriction of the foliation $\fol$
to all invariant surfaces through the origin admits a finite number of separatrices.

A complete characterization, in terms of common irreducible factors, of the foliations admitting dicritical invariant
surfaces can be obtained. More precisely, let
\begin{eqnarray*}
F & = & h_1^{k_1}\cdots h_p^{k_p} f_1^{\da_1} \cdots f_q^{\da_q} \\
G & = & h_1^{l_1}\cdots h_p^{l_p} g_1^{\db_1} \cdots g_r^{\db_r} \, \, \, .
\end{eqnarray*}
be the decomposition of two independent holomorphic first integrals $F, \, G$ into irreducible factors, where
$h_1, \ldots, h_p$ represent the common factors. Naturally it is not excluded the possibility of having $h_i$
constant equal to one for all $i=1, \ldots, p$. Similarly, it may occur that all non-trivial factors of $F, \,
G$ are, indeed, common (i.e. $f_i$ and $g_j$ are all constant equal to one). To abridge notations, these cases
will be referred to by saying that $p=0$ or that $q=0$, $r=0$, respectively. Note that, when $q = r = 0$, we
necessarily have $p \geq 2$, otherwise $F$ and $G$ would be dependent.

Without loss of generality, we can assume that $h_1, \ldots, h_p$ are ordered so that
\begin{equation}\label{order}
\frac{k_1}{l_1} \leq \cdots \leq \frac{k_p}{l_p} \, .
\end{equation}
Furthermore, if all the inequalities above are, in fact, equalities, then both $q, \, r$ should be assumed greater
than or equal to~$1$. Indeed, at least one between $q, \, r$ must be greater than~$1$, otherwise $F, \, G$ would be
dependent. The case that only one between $q, \, r$ is strictly positive can easily be transformed into the case
where $F, \, G$ do not admit common irreducible factors. From now on, we shall assume that $F, \, G$ are independent
first integrals in their simplified form in the sense above. Under this assumption:

\begin{prop}
The foliation $\fol$ possesses a dicritical invariant surface if and only if one of the following cases occurs:
\begin{itemize}
\item[(1)] in~(\ref{order}) there are at least two distinct strict inequalities
\item[(2)] in~(\ref{order}) there are exactly one strict inequality and at least one between $q, \, r$ is greater than
or equal to~$1$
\item[(3)] in~(\ref{order}) there is no strict inequalities and $p \geq 1$ (note that $q, \, r$ are both assumed greater
than or equal to~$1$ in this case).
\end{itemize}
\end{prop}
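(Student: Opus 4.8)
The plan is to reduce the search for dicritical invariant surfaces to the irreducible components of $\{FG = 0\}$ through the origin, analyse each such component through the restricted meromorphic first integral, and then close with a purely combinatorial case analysis matching the outcome against (1)--(3). First I would observe that any dicritical invariant surface $S$ must be one of $\{h_i = 0\}$, $\{f_s = 0\}$ or $\{g_t = 0\}$: every separatrix of $\fol|_S$ is a one-dimensional leaf of $\fol$ through the origin, hence a separatrix of $\fol$, and these all lie in $\{FG = 0\}$; if $S$ were not a component of $\{FG=0\}$ then $S \cap \{FG = 0\}$ would be a curve germ with only finitely many branches through the origin, so $\fol|_S$ could not carry infinitely many separatrices. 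Next I would discard the non-common factors: on $\{f_s = 0\}$ the function $G$ does not vanish identically (as $f_s \nmid G$) but does vanish at the origin, so $G|_{\{f_s=0\}}$ is a non-constant holomorphic first integral for $\fol|_{\{f_s=0\}}$, which by \cite{M-M} forces finitely many separatrices; the argument for $\{g_t=0\}$ is symmetric.

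The core of the proof concerns the common factors. Since $F,G$ are independent, $\Phi_i = F^{l_i}/G^{k_i}$ is a non-constant meromorphic first integral of $\fol$, and I would compute its restriction to $\{h_i = 0\}$. Here $h_i$ cancels, each $h_j$ ($j\ne i$) appears with exponent $k_j l_i - l_j k_i$, each $f_s$ with the positive exponent $\da_s l_i$, and each $g_t$ with the negative exponent $-\db_t k_i$. By the ordering~(\ref{order}), an $h_j$ with $j>i$ (resp. $j<i$) lands in the numerator (resp. denominator) exactly when the corresponding inequality is strict. Using that distinct irreducible factors restrict to coprime non-trivial germs on the irreducible surface $\{h_i=0\}$ (passing to its normalization when $\{h_i=0\}$ is singular), I conclude that $\Phi_i|_{\{h_i=0\}}$ has a zero through the origin iff some $k_j/l_j > k_i/l_i$ with $j>i$ or $q\ge 1$, and a pole through the origin iff some $k_j/l_j < k_i/l_i$ with $j<i$ or $r\ge 1$.

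The decisive step is the equivalence: $\{h_i=0\}$ is dicritical iff $\Phi_i|_{\{h_i=0\}}$ is genuinely meromorphic, i.e. has both a zero and a pole through the origin. One direction is again \cite{M-M}: if the restricted first integral is holomorphic, or the reciprocal of a holomorphic one, then $\fol|_{\{h_i=0\}}$ has a holomorphic first integral and hence finitely many separatrices. For the converse I would use the pencil $\{P - cQ = 0\}$ attached to the numerator $P$ and denominator $Q$ of $\Phi_i|_{\{h_i=0\}}$: since $P$ and $Q$ are coprime and both vanish at the origin, each member passes through the origin and distinct values of $c$ give distinct invariant curves, producing infinitely many separatrices. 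I expect this equivalence to be the main obstacle, both because it rests on the surface-foliation theory of Klughertz~\cite{Martine} and because it must be made precise on the possibly singular surface $\{h_i=0\}$ via normalization.

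It then remains to decide combinatorially when some index $i$ yields both a zero and a pole. Writing $s$ for the number of strict inequalities in~(\ref{order}), there are $s+1$ distinct values of $k_i/l_i$. If $s \ge 2$ there are at least three distinct values; choosing $i$ with an intermediate value gives a strictly smaller $h_j$ with $j<i$ and a strictly larger one with $j>i$, so $\{h_i=0\}$ is dicritical irrespective of $q,r$, which is case~(1). If $s = 1$, an index $i$ below the jump automatically acquires a zero from a larger $h_j$ to its right and needs $r \ge 1$ for a pole, while an index above the jump automatically acquires a pole and needs $q \ge 1$ for a zero; hence a dicritical surface exists iff $q \ge 1$ or $r \ge 1$, which is case~(2). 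Finally, if $s=0$ all ratios coincide, so no $h_j$ contributes a zero or a pole, and a dicritical surface exists iff both a surviving numerator factor and a surviving denominator factor are present, i.e. iff $p \ge 1$ (recalling $q,r \ge 1$ are assumed here), which is case~(3). Collecting the three cases gives the stated equivalence, while outside (1)--(3) the same bookkeeping shows every candidate surface carries a holomorphic restricted first integral, so no dicritical surface exists.
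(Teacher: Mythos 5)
Your strategy is essentially the paper's own: restrict the combinations $\Phi_i = F^{l_i}/G^{k_i}$ to the common-factor surfaces $\{h_i=0\}$, decide dicriticality according to whether the restriction is genuinely meromorphic, and finish with the combinatorial case analysis; you are in fact more explicit than the paper on two steps it leaves implicit (that a dicritical surface must be a component of $\{FG=0\}$, and the pencil argument producing infinitely many separatrices). However, there is a genuine gap, and it sits exactly at the step you flag as the crux: the claim that distinct irreducible factors restrict to coprime germs on $\{h_i=0\}$ is simply false, and passing to the normalization does not repair it. For instance, $z+xy$ and $z-xy$ are coprime irreducible germs at the origin of $\C^3$, yet both restrict to $z$ on the smooth surface $\{x=0\}$. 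Because of such cancellations your zero/pole bookkeeping does not survive restriction, and your surface-by-surface equivalence fails: for the foliation generated by $x\,\partial/\partial x + y\,\partial/\partial y - z\,\partial/\partial z$ with $F = xz^2(x+y)$ and $G = xz^2(x-y)$ (a pair in simplified form falling under case (3)), your bookkeeping declares $\{x=0\}$ dicritical, yet $F/G$ restricts there to the constant $-1$, and the restricted foliation, generated by $y\,\partial/\partial y - z\,\partial/\partial z$, has exactly two separatrices.

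You should know that this gap is inherited from the paper rather than introduced by you: the paper's proof makes the same tacit no-cancellation assumption (in its first case it asserts that the negative power of $h_1$ and the positive power of $h_p$ persist on $\{h_i=0\}$; in its third case, that $g^l/h^k$ is meromorphic non-holomorphic over $\{a=0\}$), with no argument that numerator and denominator remain coprime, or even non-constant, after restriction. The problem is not cosmetic, because the cancellation can occur on every candidate surface simultaneously, so the ``if'' direction is genuinely at risk: take $\fol$ generated by $x\,\partial/\partial x - y\,\partial/\partial y$ on $(\C^3,0)$ and the independent first integrals $F = xy(z+xy)$, $G = xy(z-xy)$. This pair is in simplified form with $p=2$ ($h_1=x$, $h_2=y$, all ratios equal), $q=r=1$, so case (3) applies; yet every separatrix of a restricted foliation is either a leaf of $\fol$ accumulating at the origin or lies in the singular axis $\{x=y=0\}$, and there are only three such curves, so $\fol$ possesses no dicritical invariant surface whatsoever. (The same foliation admits the pair $(xy,\, z)$, with $p=0$, which predicts correctly; the characterization is sensitive to the choice of the pair $(F,G)$ in a way that neither you nor the paper controls.) So what is missing from your argument --- and from the paper's --- is either a proof that cancellation cannot occur for a suitably normalized pair $(F,G)$, or a strengthening of the ``simplified form'' hypothesis. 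Your ``only if'' direction, by contrast, is sound: an absence of poles (or of zeros) in the bookkeeping trivially survives restriction, so outside cases (1)--(3) every candidate surface does carry a non-constant holomorphic first integral.
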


\begin{proof}
It was already checked that, if $F, \, G$ do not admit irreducible common factors, then $\fol$ does not admit
dicritical invariant surfaces. So, let us assume that $F, \, G$ possess at least one non-trivial irreducible
common factor.

Assume first that in~(\ref{order}) there are two or more distinct strict inequalities. Fix $1 < i < p$ such that
\[
\frac{k_1}{l_1} < \frac{k_i}{l_i} < \frac{k_p}{l_p} \, .
\]
Then $F^{l_i}/G^{k_i}$ is necessarily a meromorphic (non holomorphic) first integral for the restriction of $\fol$ to
the invariant surface $\{f_i = 0\}$. Indeed, the estimate above implies that the power of $h_1$ in $F^{l_i}/G^{k_i}$
is strictly negative while the power of $h_p$ is strictly positive.

Assume now that~(\ref{order}) possesses exactly one strict inequality. In this case, it follows that $F, \, G$ can
be written in the form $F = a_1^{k_1} a_2^{k_2} g$ and $G = a_1^{l_1} a_2^{l_2} h$ where $a_1, \, a_2, \, g, \, h$
are not necessarily irreducible, but do not admit any common irreducible factors among them. Furthermore, we have
$k_1/l_1 < k_2/l_2$. Suppose first that at least one between $q, \, r$ is greater than or equal to~$1$. Assuming
$q \geq 1$, i.e. assuming that $g$ is a non-constant function vanishing at the origin, we conclude that
\[
\frac{F^{l_2}}{G^{k_2}} = a_1^{k_1l_2 - l_1k_2} \frac{g^{l_2}}{h^{k_2}}
\]
is a meromorphic (non-holomorphic) first integral for the foliation induced on $\{a_2 = 0\}$ since the power of $a_1$
is strictly negative and $g$ is not invertible. The induced foliation on $\{a_2 = 0\}$ is therefore dicritical. Suppose
now that $q = r = 0$. Then $F, \, G$ are simply given by $F = a_1^{k_1} a_2^{k_2}$ and $G = a_1^{l_1} a_2^{l_2}$. The
only invariant surfaces over which the induced foliation can be dicritical are those given by the equations $a_1 = 0$
and $a_2 = 0$ and, in any event, it follows that both $F, \, G$ vanish identically over the invariant surfaces in question.
In this case, however, it follows that $a_i$ constitutes a non-constant holomorphic first integral for the induced foliation
over $\{a_j = 0\}$, for $i \ne j$. Thus the induced foliations cannot be dicrictical over $\{a_j = 0\}$.

Finally, it remains to consider the case where~(\ref{order}) does not admit strict inequalities. As already
mentioned, in this case $q, \, r$ are both assumed greater than or equal to~$1$. Therefore $F = a^k g$ and
$G = a^l h$ for some non-constant holomorphic functions $a, \, g, \, h$ vanishing at the origin and without
non-trivial common factors among them. Therefore $g^l/h^k$ is a meromorphic (non-holomorphic) first integral
over the invariant variety $\{a = 0\}$. The result follows.
\end{proof}

From now on, let us assume that $\fol$ admits two independent holomorphic first integrals and possess an isolated
singular point at the origin. Under these assumptions, we first note the following

\begin{lema}
$\fol$ admits a separatrix.
\end{lema}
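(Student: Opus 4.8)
The plan is to show that a foliation $\fol$ on $(\C^3,0)$ with an isolated singularity admitting two independent holomorphic first integrals $F, G$ must have at least one separatrix. The key observation is that the separatrices of $\fol$ are exactly the one-dimensional invariant analytic curves through the origin, and these are forced to exist by the geometry of the common zero locus of $F$ and $G$. My first step is to use the irreducible decompositions $F = f_1^{m_1}\cdots f_k^{m_k}$ and $G = g_1^{n_1}\cdots g_l^{n_l}$ as set up in Section~\ref{construction}. Each hypersurface $\{f_i = 0\}$ and $\{g_j = 0\}$ is $\fol$-invariant, since $F$ and $G$ are first integrals and so the level sets $\{F = c\}$, $\{G = c\}$ are unions of leaves; passing to the limit $c \to 0$ shows each irreducible component of $\{F = 0\}$ and $\{G = 0\}$ is invariant.

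The heart of the argument is then to intersect two such invariant surfaces. Since $F$ and $G$ are independent (Definition~\ref{defindependent}), they are in particular not functionally dependent, so $\{F = 0\}$ and $\{G = 0\}$ cannot share all their components; more importantly, I would choose one irreducible component $S_1 = \{f_1 = 0\}$ of $\{F=0\}$ and one irreducible component $S_2 = \{g_1 = 0\}$ of $\{G=0\}$ that are distinct as germs of surfaces. Both pass through the origin and both are $\fol$-invariant. Their intersection $S_1 \cap S_2$ is a germ of analytic set through the origin of dimension at least $2 + 2 - 3 = 1$, by the standard dimension estimate for intersections of hypersurfaces in $\C^3$. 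Thus $S_1 \cap S_2$ contains an analytic curve $C$ through the origin. Because $C$ lies in the invariant surface $S_1$ and the foliation is one-dimensional, I would argue that $C$ (or a one-dimensional invariant subset of it) is tangent to $\fol$ away from the singular locus, hence is a separatrix.

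The step I expect to be the main obstacle is ensuring that the resulting curve $C$ is genuinely $\fol$-invariant rather than merely contained in an invariant surface: an arbitrary curve inside an invariant surface need not be a leaf. To handle this, I would restrict $\fol$ to the invariant surface $S_1$, obtaining a one-dimensional foliation $\fol|_{S_1}$ on a (possibly singular) surface; the curve $C = S_1 \cap S_2$ is contained in $S_1$ and is itself $\fol$-invariant because $S_2$ is invariant and the intersection of two invariant sets is invariant. Since the foliation on $S_1$ has leaves of dimension one, any one-dimensional invariant analytic subset through the origin is precisely a union of separatrices, so $C$ furnishes the desired separatrix. I would also need to treat the degenerate case in which $F$ and $G$ share a common irreducible factor $h$: then $\{h = 0\}$ is itself invariant, and one can either intersect it with another component or invoke the existence of a separatrix of the restricted two-dimensional foliation $\fol|_{\{h=0\}}$ directly. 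In either case the conclusion follows, so $\fol$ admits a separatrix.
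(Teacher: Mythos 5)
Your overall strategy coincides with the paper's: each irreducible component of $\{F=0\}$ and $\{G=0\}$ is invariant, the intersection of two \emph{distinct} such components is an invariant curve through the origin, and since the singularity is isolated this curve cannot lie in the singular set, so its branches are separatrices. That geometric core is correct and is exactly the paper's argument. The gap is in how you produce two distinct components, which is precisely the point where the independence hypothesis has to do its work. Your claim that independence forces $\{F=0\}$ and $\{G=0\}$ not to share all their components is false: for the foliation of $x\,\partial/\partial x + y\,\partial/\partial y - z\,\partial/\partial z$ (isolated singularity), the functions $F = xyz^2$ and $G = xy^2z^3$ are independent first integrals in the sense of Definition~\ref{defindependent} --- indeed they are even functionally independent --- yet $\{F=0\} = \{G=0\} = \{xyz=0\}$. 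In this example your construction still survives, because one may pick $\{x=0\}$ as a component of $\{F=0\}$ and $\{y=0\}$ as a component of $\{G=0\}$; the only genuinely bad case is when $\{F=0\}=\{G=0\}$ is a \emph{single} irreducible hypersurface $\{h=0\}$, i.e. $F = h^p$ and $G = h^q$ up to units.

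For that residual case your fallback does not work. You propose to ``invoke the existence of a separatrix of the restricted two-dimensional foliation $\fol|_{\{h=0\}}$ directly'', but $\{h=0\}$ is in general a singular surface, and the Camacho--Sad separatrix theorem is not available there: there exist foliations on singular surfaces with no separatrix at all, and nothing forces $\{h=0\}$ to be one of the surfaces for which the theorem persists. Moreover, in this case both $F$ and $G$ vanish identically on $\{h=0\}$, so the restricted foliation inherits no first integral that you could use in place of a separatrix theorem. The paper disposes of exactly this case in one stroke, and this is the step you are missing: if $F$ and $G$ have a single irreducible factor which is common to both, so that $F=h^p$ and $G=h^q$, then $F$ and $G$ are not independent, contradicting the hypothesis. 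So the fix is to replace both your false claim and your singular-surface fallback by that observation: independence guarantees at least two distinct irreducible factors among those of $F$ and $G$, hence a component of $\{F=0\}$ and a component of $\{G=0\}$ that differ, which is all the rest of your (otherwise correct) argument needs.
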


\begin{proof}
Let $F$ and $G$ be two independent holomorphic first integrals for the holomorphic foliation $\fol$ and let us
consider their decomposition into irreducible factors
\begin{align*}
F &= f_1^{\da_1} \cdots f_k^{\da_k} \\
G &= g_1^{\db_1} \cdots g_l^{\db^l} \, ,
\end{align*}
where $f_1, \, \ldots , \, f_k, \, g_1, \, \ldots , \,  g_l$ are holomorphic functions and $n_1, \, \ldots , \,
n_k , \, m_1, \, \ldots , \, m_l \in \N$. If $\fol$ admits a separatrix then the separatrix must be contained in
the intersection of the irreducible component $\{f_i = 0\}$ with an irreducible component $\{g_j = 0\}$, for some
$i, j$. Consider the irreducible component $\{f_1 = 0\}$. If there exists an irreducible component $\{g_j = 0\}$
do not coinciding with $\{f_1 = 0\}$, then the intersection $\{f_1 = 0\} \cap \{g_j = 0\}$ corresponds to a
separatrix of $\fol$. In fact, the two irreducible components are invariant by $\fol$ and thus so is their intersection,
unless it is contained in the singular set of $\fol$. The latter case does not occur since the origin is supposed
to be an isolated singular point of $\fol$.

To conclude the proof we claim that $F, \, G$ possess at least two distinct irreducible factors. Indeed, assume
for a contradiction that both $F, \, G$ possess exactly one irreducible factor and that this factor is common
for $F, \, G$. This means that $F = f^p$ and $G = f^q$ for some $p, \, q \in \N$ and an irreducible factor $f$.
This implies that $F, \, G$ are not independent, contradicting our assumption.
\end{proof}

Summarizing the preceding proof, we have seen that the intersection of two distinct invariant surfaces for $\fol$
defines a curve that either is a separatrix of $\fol$ or it is contained in the singular set of $\fol$. Naturally
the second possibility cannot occur if singularities are supposed to be isolated. Next, we have:

\begin{lema}\label{lemmanumberseparatrices}
Let $\fol$ be as above and assume that it does not admit dicritical invariant surfaces. Then $\fol$ possesses a
finite number of separatrices. Moreover, the separatrices are the unique leaves accumulating at the singular
point.
\end{lema}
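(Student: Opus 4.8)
The plan is to confine all separatrices, and more generally all leaves accumulating at the origin, to the common zero set $\{F = 0\} \cap \{G = 0\}$ of two independent holomorphic first integrals $F, \, G$, and then to use the absence of dicritical invariant surfaces to bound this locus. The first observation is that $F$ and $G$ vanish at the origin and are constant along the leaves of $\fol$. Hence, if $\ell$ is a leaf with $0 \in \overline{\ell}$, continuity of $F, \, G$ forces the constant values $F|_{\ell}$ and $G|_{\ell}$ to be $0$, so that $\ell \subseteq \{F = 0\} \cap \{G = 0\}$. In particular every separatrix and every accumulating leaf lies in $\{F = 0\} \cap \{G = 0\} = \bigcup_{i,j} \left( \{f_i = 0\} \cap \{g_j = 0\} \right)$.

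Next I would invoke the preceding Proposition to translate the hypothesis that $\fol$ has no dicritical invariant surface into structural information on $F, \, G$. The negation of conditions (1)--(3) leaves exactly two possibilities: either (A) $F$ and $G$ share no common irreducible factor, or (B) after regrouping the common factors one has $F = a_1^{k_1} a_2^{k_2}$ and $G = a_1^{l_1} a_2^{l_2}$, with $a_1, \, a_2$ non-constant, without common factors, and $k_1/l_1 < k_2/l_2$. In both cases the upshot is the same clean statement, which I would establish and then use: the restriction of $\fol$ to each irreducible invariant surface among the $\{f_i = 0\}, \, \{g_j = 0\}$ admits a non-constant holomorphic first integral. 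In case (A) this integral is $G$ restricted to $\{f_i = 0\}$ (resp. $F$ restricted to $\{g_j = 0\}$), which is non-constant precisely because there is no common factor; in case (B), as already observed in the proof of the Proposition, the restriction of $\fol$ to $\{a_2 = 0\}$ (resp. $\{a_1 = 0\}$) has $a_1$ (resp. $a_2$) as such an integral.

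From here the conclusion follows. In case (A) each $\{f_i = 0\} \cap \{g_j = 0\}$ is a proper intersection, hence an analytic set of dimension at most one; as there are finitely many pairs, $\{F = 0\} \cap \{G = 0\}$ is a finite union of irreducible curves through the origin, each invariant and, the singularity being isolated, each a separatrix (cf. the remark following the previous lemma). This bounds the number of separatrices and, since every accumulating leaf sits inside this one-dimensional set, identifies the accumulating leaves with the separatrices. In case (B) the locus $\{a_1 = 0\} \cup \{a_2 = 0\}$ is genuinely two-dimensional, so I would instead argue on each surface: a leaf contained in $\{a_2 = 0\}$ that accumulates at the origin has the first integral $a_1$ constant equal to $a_1(0) = 0$ along it, hence lies in the curve $\{a_1 = 0\} \cap \{a_2 = 0\}$, and symmetrically for $\{a_1 = 0\}$. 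Thus all accumulating leaves, and in particular all separatrices, are contained in this single curve, again yielding finiteness and the desired identification.

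The main obstacle is precisely case (B): there the candidate zero set $\{F = 0\} \cap \{G = 0\}$ is two-dimensional, so finiteness is invisible to a dimension count, and one must descend to the invariant surfaces and exploit the non-dicriticalness, i.e. the induced holomorphic first integral, to push accumulating leaves onto the one-dimensional intersection. A recurring minor point to verify is that the one-dimensional components produced are genuine separatrices rather than pieces of the singular locus, which is guaranteed by the isolatedness of the singularity.
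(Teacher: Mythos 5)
Your proof is correct and follows essentially the same route as the paper's: both confine the leaves accumulating at the origin to the zero sets of $F$ and $G$, then use the absence of dicritical invariant surfaces (via the preceding Proposition) to obtain non-constant holomorphic first integrals on the invariant surfaces, forcing all accumulating leaves onto finitely many invariant curves which are therefore the separatrices. Your explicit dichotomy (A)/(B) is just a careful unpacking of the paper's phrase ``a suitable combination of $F$ and $G$'', and if anything it treats the common-factor case (B) --- where the naive intersection $\{f_i=0\}\cap\{g_j=0\}$ can be two-dimensional --- more explicitly than the paper's terse argument does.
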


\begin{proof}
The leaves of $\fol$ accumulating at the origin must be contained in the invariant varieties $\{f_i = 0\}$ or
$\{g_j = 0\}$ for some $i, j$. Since the restriction of $\fol$ to the invariant surfaces is supposed to be
non-dicritical, it follows that a suitable combination of $F$ and $G$ leads us to a non-constant holomorphic
first integral over each one of the invariant varieties above. In particular, the leaves accumulating at the
origin must coincide with the separatrices. Furthermore, the separatrices must be contained in the intersection
of an irreducible component $\{f_i = 0\}$ with another one of the form $\{g_j = 0\}$. Clearly there is a finite
number of possible combinations and the result follows.
\end{proof}

Let $\tilf$ represent the proper transform of $\fol$ by the punctual blow-up at the origin. Denote by $E$ the
resulting component of the exceptional divisor and let $\tilf_E$ denote the foliation induced by $\tilf$ on $E$.
To prove Theorem~\ref{teodicritical} we must assume that $\tilf$ only admits isolated singular points and that
these singular points are simple. The proof is divided in different steps beginning with the following.

\begin{prop}\label{propmerofirstintegral}
The foliation induced by $\tilf$ on the exceptional divisor admits a non-constant meromorphic first integral.
\end{prop}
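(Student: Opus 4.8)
The plan is to manufacture the required first integral of $\tilf_E$ directly from the two holomorphic first integrals of $\fol$, by forming the unique monomial combination in which the multiplicity of the exceptional divisor cancels, and then restricting to $E$. Since the statement concerns the induced foliation $\tilf_E$, we are in the non-dicritical case, so $E$ is invariant by $\tilf$ and $\tilf_E$ is defined.

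First I would normalise. Let $F,G$ be independent holomorphic first integrals of $\fol$. Replacing $F$ by $F-F(0)$ and $G$ by $G-G(0)$ we may assume $F(0)=G(0)=0$: these shifts preserve the fibres, hence still define $\fol$, and they remain independent, since a relation $F-F(0)=f\circ(G-G(0))$ would give $F=\widetilde f\circ G$. Thus $\nu_F:=\mathrm{ord}_0 F\ge 1$ and $\nu_G:=\mathrm{ord}_0 G\ge 1$, with leading homogeneous forms (tangent cones) $F_{\nu_F},G_{\nu_G}$. Let $\pi$ be the punctual blow-up; in a standard affine chart $(x,t,w)$ with $\pi(x,t,w)=(x,tx,wx)$ and $E=\{x=0\}$, one has $F\circ\pi=x^{\nu_F}\hat F$ and $G\circ\pi=x^{\nu_G}\hat G$, where $\hat F|_E,\hat G|_E$ are the dehomogenisations of $F_{\nu_F},G_{\nu_G}$ and do not vanish identically.

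Next I would write down the candidate. Set
\[
\Psi=\frac{(F\circ\pi)^{\nu_G}}{(G\circ\pi)^{\nu_F}}=\frac{\hat F^{\nu_G}}{\hat G^{\nu_F}}\, .
\]
Being a function of the first integrals $F\circ\pi$ and $G\circ\pi$ of $\tilf$, $\Psi$ is a meromorphic first integral of $\tilf$, and it is non-constant since $F\circ\pi$ and $G\circ\pi$ are independent. The powers of $x$ cancel, so $\Psi$ restricts to $E$ as
\[
\Psi|_E=\frac{F_{\nu_F}^{\,\nu_G}}{G_{\nu_G}^{\,\nu_F}}\, ,
\]
a quotient of homogeneous polynomials of the common degree $\nu_F\nu_G$, i.e.\ a rational (hence meromorphic) function on $E\simeq\C\p(2)$. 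As $E$ is invariant, the leaves of $\tilf_E$ are leaves of $\tilf$ contained in $E$, along which $\Psi$ is constant; hence $\Psi|_E$ is a meromorphic first integral of $\tilf_E$, and the proposition follows as soon as $\Psi|_E$ is shown to be non-constant.

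The step I expect to be the main obstacle is precisely this non-constancy, that is, excluding $F_{\nu_F}^{\,\nu_G}=c\,G_{\nu_G}^{\,\nu_F}$ (the tangent cones of $\{F=0\}$ and $\{G=0\}$ being proportional powers). This is where the hypotheses that $\tilf$ has only isolated and simple singularities must enter. Computing the generator of $\tilf$ in the chart above, its projection to $E$ is cut out by the $1$-form $\nu_G G_{\nu_G}\,dF_{\nu_F}-\nu_F F_{\nu_F}\,dG_{\nu_G}=F_{\nu_F}G_{\nu_G}\,d\log(\Psi|_E)$; were $\Psi|_E$ constant this form would vanish identically, forcing the tangential components of the generator, together with the Jacobian of $(F_{\nu_F},G_{\nu_G})$, to vanish simultaneously along $E$. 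I would then argue that such a degeneracy makes $\tilf$ tangent to $E$ to too high an order to carry only isolated simple singularities on $E$, which yields the contradiction. In the (thereby excluded) degenerate case one would instead expand $\Psi$ along $\{x=0\}$ and take its first non-constant coefficient as the sought first integral of $\tilf_E$, keeping track of the tangent cones of the irreducible factors of $F$ and $G$ by a Newton-polygon bookkeeping; but under the stated genericity the direct construction above suffices.
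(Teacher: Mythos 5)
There is a genuine gap, and it sits exactly where you flagged ``the main obstacle''. Your construction of $\Psi=(F\circ\pi)^{\nu_G}/(G\circ\pi)^{\nu_F}$ and of its restriction $F_{\nu_F}^{\nu_G}/G_{\nu_G}^{\nu_F}$ to $E$ is precisely the paper's first step (with $k=\nu_F$, $l=\nu_G$); but the degenerate case $F_{\nu_F}^{\nu_G}=c\,G_{\nu_G}^{\nu_F}$ is the entire content of the paper's proof, and neither of your two suggested ways of dealing with it works. Your main suggestion --- that the degeneracy contradicts the hypothesis of isolated simple singularities --- rests on a false inference: if the form $\nu_G G_{\nu_G}\,dF_{\nu_F}-\nu_F F_{\nu_F}\,dG_{\nu_G}$ vanishes identically, this does not force the tangential part of a generator of $\tilf$ to vanish along $E$; it only means that this lowest-order combination of the pulled-back forms fails to define $\tilf_E$, which is then governed by higher-order terms. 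Concretely, take $F=x$ and $G=x+yz$: the leading forms are proportional, so your form is identically zero, yet in the chart $\pi(x,t,w)=(x,tx,wx)$ the blown-up foliation is generated by $t\,\partial/\partial t-w\,\partial/\partial w$, a perfectly non-degenerate foliation tangent to $E$ with a simple singularity on it. Consistently with this, the paper's proof of the proposition never uses the genericity hypotheses: it relies solely on the independence of $F$ and $G$. Your fallback suggestion (expand $\Psi=\sum_j\Psi_j x^j$ along $E$ and take the first non-constant coefficient) also fails, because those coefficients are not first integrals of $\tilf_E$: writing a generator of $\tilf$ as $x\tilde a\,\partial/\partial x+b\,\partial/\partial t+c\,\partial/\partial w$, the first non-zero coefficient $\Psi_j$ with $j\geq 1$ satisfies the twisted equation $j\,\tilde a_0\Psi_j+X_E(\Psi_j)=0$ rather than $X_E(\Psi_j)=0$.

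The idea you are missing is to modify the first integrals instead of expanding the quotient. If the leading homogeneous terms satisfy $G^k=\alpha F^l$ (after the harmless normalizations you made), the paper replaces the pair $(F,G)$ by $(F,H)$ with $H=F^l-\alpha G^k$: this is again a non-constant holomorphic first integral, still independent of $F$, and its order is strictly larger than $kl$; one then runs the same construction for $(F,H)$, and iterates if the degeneracy reappears. The crux --- and the actual content of the paper's proof --- is that this loop terminates: an induction on the successive cancellations shows that if it never stopped, then $G=\sum_{i\geq 1}\alpha_i F^i$, i.e.\ $G$ would be a function of $F$, contradicting independence in the sense of Definition~\ref{defindependent}. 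In my example above, one step of this repair ($H=G-F=yz$, of order $2$, with $H_2$ not proportional to $F_1^2=x^2$) already produces the non-constant first integral $x^2/\pi^*H=1/(tw)$ on $E$. Without this termination-by-independence mechanism your argument does not go through.
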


\begin{proof}
Let $F, \, G$ be two independent holomorphic first integrals for $\fol$ and consider the pull-back of $F, \, G$ by
the punctual blow-up map. In standard affine coordinates $(u,v,z)$ for the blow-up map $\pi : \widetilde{\C}^3
\rightarrow \C^3$, where the exceptional divisor is identified with $\{z = 0\}$, the pull-backs of $F, \, G$ are
given by
\begin{align*}
\widetilde{F} (u,v,z) &= z^k \overline{F} (u,v,z) \\
\widetilde{G} (u,v,z) &= z^l \overline{G} (u,v,z) \,
\end{align*}
respectively, where $\overline{F}$ and $\overline{G}$ are holomorphic functions not divisible by $z$ and where $k,
\, l \in \N^{\ast}$. Since $F$ and $G$ are independent, it follows that $\widetilde{F}^l /\widetilde{G}^k$ defines
a non-constant meromorphic first integral for the blown-up foliation.

Note that the preceding statement does not imply that the restriction of $\widetilde{F}^l/\widetilde{G}^k$ to $\{z = 0\}$
is not constant as well. However, when this restriction is not constant, then it induces a meromorphic first integral for
the induced foliation on $E \simeq \C\p(2)$ and the proposition results at once. So, let us assume that $\widetilde{F}^l
/\widetilde{G}^k$ is constant over $\{z = 0\}$. This is equivalent to saying that the first non-zero homogeneous components
of $F^l$ and $G^k$ coincide up to a non-vanishing constant $\alpha$, i.e. $G^k = \alpha F^l$ for some $\alpha \in \C^{\ast}$.
In this case, the process above should be repeated with the first integrals $F$ and $H$, with $H = F^l - \alpha G^k$. Note
that $H$ still is a non-constant holomorphic first integral for $\fol$ independent of $F$.

Denote by $p$ the order of $H$. We have that $p$ is greater than $kl$ since the first non-trivial homogeneous components
of $F^l$ and of $\alpha G^k$ cancel each other on $H$. Naturally the fact that these components cancel each other out, does
not ensure that the first non-zero homogeneous component of $F^p$ and $H^k$ must be distinct up to a multiplicative factor.
In other words, it may still happen that $\widetilde{F}^p/\widetilde{H}^k$ is constant over $\{z = 0\}$ and hence it does
not yield a meromorphic first integral for the foliation on $\C\p(2)$. However, when the restriction of $\widetilde{F}^p
/\widetilde{H}^k$ to $\{z = 0\}$ is constant, we repeat the process once again for $F$ and $I = F^p - \beta H^k$ where
$\beta$ plays the same role for $F, \, H$ as $\alpha$ for $F, G$.

\smallbreak

{\bf Claim:} This process stops after finitely many steps.

\smallbreak

To prove the claim it suffices to show that, if this process does not stop, then $F, \, G$ are, in fact, dependent. So, let
us consider $F, \, G$ as above and suppose that the first non-zero homogeneous components of $F, \, G$ are powers of a same
homogeneous polynomial. Indeed, modulo replacing $F$ (resp. $G$) by a suitable power of it, we can assume without loss of
generality that these first non-zero components differ by a multiplicative constant. It will be proved that, if this process
does not stop then
\[
G = \sum_{i=1}^{\infty} \alpha_i F^i \,
\]
where $F^i$ denotes the $i$-th power of $F$ and $\alpha_i$ is the multiplicative constant on each step. Induction will be
used to prove this result. More precisely, it will be proved the following.

Let $F = F_k + F_{k+1} + F_{k+2} + \ldots$ (resp. $G = G_k + G_{k+1} + G_{k+2} + \ldots$) be the decomposition of $F$
(resp. $G$) in homogeneous components.
\begin{itemize}
\item[(1)] If $G_k = \alpha_1 F_k$ and the process does not stop, then
\[
G = \alpha_1 F + G^{(1)}
\]
where the first non-trivial homogeneous component of $G^{(1)}$ has order at least $2k$ and $G^{(1)}_{2k} = \alpha_2 F^2_k$.

\item[(2)] Suppose that $G = \alpha_1 F + \cdots + \alpha_iF^i + G^{(i)}$ with $G^{(i)}$ of order at least $(i+1)k$ and
such that $G^{(i)}_{(i+1)k} = \alpha_{i+1} F_k^{(i+1)}$. If the process does not stop, then
\[
G = \alpha_1 F + \cdots + \alpha_{i+1} F^{i+1} + G^{(i+1)}
\]
where the first non-trivial homogeneous component of $G^{(i+1)}$ has order at least $(i+2)k$ and $G^{(i+1)}_{(i+2)k} =
\alpha_{i+2} F_k^{i+2}$.
\end{itemize}
If both conditions are proved, then the result follows at once. Let us prove them.

\begin{itemize}
\item[(1)] Suppose that $G_k = \alpha_1 F_k$ and let $G^{(1)} = G - \alpha_1 F$. Denote by $p$ the order of $G^{(1)}$, being
$p > k$. The foliation induced by $\fol$ on $E$ cannot be defined by the restriction of $\widetilde{F}^p /(\widetilde{G}^{(1)})^k$
to $\{z = 0\}$ if and only if the first non-zero homogeneous components of $\widetilde{F}^p$ and $(\widetilde{G}^{(1)})^k$
coincide up to a multiplicative constant $\alpha_2$. This means that the first non-trivial homogenous component of $\widetilde{G}^{(1)}$
is a power of $F_k$. In particular, we obtain that $G_j = \alpha_j F_j$ for all $k \leq j < 2k$. Thus $G$ can be written as
\[
G = \alpha_1 F + G^{(1)}
\]
with $G^{(1)}$ as described above. Note that the multiplicative constant $\alpha_2$ might be equal to zero.

\item[(2)]  Suppose now that $G = \alpha_1 F + \cdots + \alpha_iF^i + G^{(i)}$ with $G^{(i)}$ as above. Let
\[
G^{(i+1)} = G^{(i)} - \alpha_{i+1} F^{i+1} \, .
\]
The order $p$ of $G^{(i+1)}$ is greater than $(i+1)k$. Again the foliation $\fol_E$ cannot be defined by the restriction of
$\widetilde{F}^p /(\widetilde{G}^{(i+1)})^k$ to $\{z = 0\}$ if and only if the first non-zero homogeneous component of
$G^{(i+1)}$ is a power of $F_k$. Therefore $G^{(i+1)}$ has order at least $(i+2)k$ and $G^{(i+1)}_{(i+2)k} = \alpha_{i+2}
F_k^{i+2}$ for some $\alpha_{i+2} \in \C$. Furthermore
\[
G_j^{(i)} = \alpha_{i+1} F_j^{i+1}
\]
for all $(i+1)k \leq j < (i+2)k$. The result follows.
\end{itemize}
\end{proof}

As an immediate consequence of the previous lemma, we have the following.

\begin{coro}\label{corolary}
Let $p \in E$ be a singular point for $\tilf$ and let $\lambda, \mu$ denote the eigenvalues of $\fol_E$ at $p$.
Then $\lambda \ne 0$ if an only if $\mu \ne 0$. Furthermore, if $\lambda, \mu$ are distinct from zero then
$\lambda /\mu \in \Q$.
\end{coro}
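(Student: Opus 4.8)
The plan is to deduce both assertions from the non-constant meromorphic first integral of the foliation $\fol_E$ induced on $E$ that is furnished by Proposition~\ref{propmerofirstintegral}, the single underlying mechanism being the \emph{periodicity of the local holonomy}. Precisely, I would first isolate the following principle: if a germ of singular foliation on a surface admits a non-constant meromorphic first integral $H$, then the holonomy of every separatrix is a germ of diffeomorphism of finite order. To prove it, fix a separatrix $S$ through the singular point, choose a regular point $q \in S \setminus \{p\}$ together with a small transversal $\Sigma$ at $q$, and set $h = H|_{\Sigma}$. Since $H$ is constant on the leaves, $h$ is invariant under the holonomy map $\phi$, that is $h \circ \phi = h$; and since $\Sigma$ is transverse to the foliation, $h$ is non-constant. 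A non-constant meromorphic germ on $(\C,0)$ acquires, in a suitable coordinate $w$, the form $w \mapsto w^{k}$ (after subtracting its value at $q$, or after replacing $H$ by $1/H$ at a pole), and then the identity $h \circ \phi = h$ forces $(\phi(w)/w)^{k} \equiv 1$, whence $\phi(w) = \zeta w$ with $\zeta^{k} = 1$. Hence $\phi$ has finite order.

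With this principle at hand, the first assertion reduces to excluding a saddle--node at $p$, i.e. the case in which exactly one of $\lambda, \mu$ vanishes; say $\mu = 0 \ne \lambda$. Such a singularity always possesses an analytic strong separatrix $S$, tangent to the eigendirection of $\lambda$, and the linear part of the holonomy of $S$ equals $\rme^{2\pi i\,\mu/\lambda} = 1$, so this holonomy is tangent to the identity. On the other hand, the holonomy of the strong separatrix of a saddle--node is a \emph{non-trivial} tangent-to-the-identity germ --- this is precisely the analytic invariant obstructing its linearization, already apparent on the Dulac normal form. Since a finite-order germ tangent to the identity is necessarily the identity, these two facts contradict the periodicity principle. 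This rules out $\mu = 0 \ne \lambda$, proving that $\lambda \ne 0$ if and only if $\mu \ne 0$.

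For the second assertion, assume $\lambda \mu \ne 0$ and suppose, for a contradiction, that $\lambda/\mu \notin \Q$. Then $\lambda/\mu \notin \Q_{>0}$, so $p$ is a non-degenerate singularity of $\fol_E$ admitting a separatrix $S$ tangent to the eigendirection of $\lambda$. The linear part of the holonomy of $S$ is $\rme^{2\pi i\,\mu/\lambda}$, which by the periodicity principle must be a root of unity; consequently $\mu/\lambda \in \Q$ and therefore $\lambda/\mu \in \Q$, contradicting the hypothesis. Hence $\lambda/\mu \in \Q$, as desired.

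The step I expect to be the main obstacle is the interface between the periodicity principle and the local theory of the singularity: guaranteeing that $p$ carries a genuine separatrix with a well-defined, non-trivial holonomy. I would take care to select the transversal at a regular point of $S$ so that $h = H|_{\Sigma}$ is honestly non-constant, and to invoke the classical structure of the saddle--node (or, equivalently, the analysis of foliations with meromorphic first integrals in the spirit of~\cite{Martine} and~\cite{M-M}) in order to certify that its strong-separatrix holonomy is not the identity. Once these local inputs are secured, the computation of the linear part $\rme^{2\pi i\,\mu/\lambda}$ and the passage from ``root of unity'' to ``rational ratio'' are routine.
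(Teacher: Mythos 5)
Your proof is correct and follows the route the paper intends: the paper states this corollary as an immediate consequence of Proposition~\ref{propmerofirstintegral}, and the implicit mechanism is exactly what you spelled out --- a non-constant meromorphic first integral forces every separatrix holonomy to have finite order, which simultaneously excludes a saddle-node (whose strong-separatrix holonomy is a non-trivial parabolic germ) and forces the multiplier $\rme^{2\pi i\,\mu/\lambda}$ to be a root of unity, i.e. $\lambda/\mu \in \Q$. You have simply made explicit the standard Mattei--Moussu-type details that the paper leaves unwritten.
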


Next we have:

\begin{lema}\label{lemmanonvanishing}
Let $p \in E$ be a simple singular point for $\tilf$. Then none of the eigenvalues of $\tilf$ at $p$ is equal to zero.
\end{lema}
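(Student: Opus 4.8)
The plan is to split the three eigenvalues of $\tilf$ at $p$ into the two \emph{tangential} ones, $\lambda$ and $\mu$ --- which are exactly the eigenvalues of $\tilf_E$ at $p$, since $E = \{z=0\}$ is invariant --- and the \emph{transverse} one $\nu$, namely the coefficient of $z$ in the $\partial_z$-component (the $\partial_z$-component is divisible by $z$ because $E$ is invariant, so the linear part $L$ of a defining field $\tX$ is block upper-triangular with diagonal eigenvalues $\lambda,\mu,\nu$). By Corollary~\ref{corolary} we have $\lambda=0$ if and only if $\mu=0$, so, as $p$ is simple, only three configurations survive: all eigenvalues nonzero (the desired conclusion), the case $\lambda=\mu=0$ with $\nu\neq0$, and the case $\lambda,\mu\neq0$ with $\nu=0$. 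The whole task is to exclude the last two.

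The main device is the relation satisfied by the factor $\overline F$ in the pull-back $\widetilde F = z^{k}\overline F$. From $\tX(\widetilde F)=0$ and $\tX(z)=z\,\tilde c$ one gets $\tX(\overline F)=-k\,\tilde c\,\overline F$, where $\tilde c(p)=\nu$ and $k\geq1$; similarly for $\overline G$ with $l\geq1$. Comparing lowest-order homogeneous parts at $p$, the degree-$0$ comparison gives $\nu\,\overline F(p)=0$, and if $\nu\neq0$ then $\overline F(p)=0$, so the leading part $\overline F_e$ (of degree $e\geq1$) satisfies $L(\overline F_e)=-k\nu\,\overline F_e$. Since the eigenvalues of $L$ on homogeneous polynomials of degree $e$ are the numbers $a\lambda+b\mu+c\nu$ with $a+b+c=e$ and $a,b,c\geq0$, this already disposes of the configuration $\lambda=\mu=0$, $\nu\neq0$: there these eigenvalues are the nonnegative multiples $c\nu$, whereas $\overline F_e$ would be an eigenvector for $-k\nu$, with $-k<0$; hence $\overline F_e=0$, a contradiction. (Equivalently, an $L$-resonant leading part must be $z$-free, while $z^{k}\overline F$ is divisible by $z$.)

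The remaining, genuinely harder, case is $\nu=0$ with $\lambda,\mu\neq0$, where the transverse direction carries the zero eigenvalue and $z$-divisibility no longer clashes with resonance. I would split according to the sign of $\lambda/\mu\in\Q$ (rational by Corollary~\ref{corolary}). If $\lambda/\mu>0$, the only $L$-resonant monomials are powers of $z$, so the leading part of every holomorphic first integral is a power of $z$; two such integrals are dependent to leading order, and the inductive cancellation mechanism of Proposition~\ref{propmerofirstintegral} then forces $\widetilde F$ and $\widetilde G$ to be functionally dependent, contradicting independence. If $\lambda/\mu<0$, there is a genuine tangential resonant integral $x^{a_0}y^{b_0}$, but the zero transverse eigenvalue makes $p$ a saddle--node in the transverse direction: restricting $\tilf$ to an invariant surface tangent to one tangential eigendirection and to the $\nu$-direction yields a two-dimensional singularity with eigenvalues $(\neq0,\,0)$, which admits no non-constant holomorphic first integral. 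Consequently every holomorphic first integral of $\tilf$ would be a function of $x^{a_0}y^{b_0}$ alone, the transverse direction contributing only transcendental (essential-singularity) integrals, leaving at most one independent holomorphic first integral --- again a contradiction.

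The crux is precisely this last case: making rigorous that a zero transverse eigenvalue, i.e. saddle--node behaviour off the divisor, is incompatible with the persistence of a \emph{second} independent holomorphic first integral. The delicate point is to control the interaction between the resonant tangential integral and the non-integrable transverse direction; here I expect to use the invariant (separatrix) surfaces furnished by $\widetilde F,\widetilde G$ together with the classical absence of holomorphic first integrals for saddle--nodes, and, if necessary, the normalization of leading parts provided by Proposition~\ref{propmerofirstintegral}.
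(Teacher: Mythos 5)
Your case division (both tangential eigenvalues vanish, or only the transverse one vanishes), obtained from Corollary~\ref{corolary}, is the same as the paper's, and your first case is handled correctly: the identity $\tX(\overline F)=-k\,\tilde c\,\overline F$, combined with the fact that the linear part acts on degree-$e$ homogeneous polynomials with eigenvalues $a\lambda+b\mu+c\nu$, does exclude $\lambda=\mu=0$, $\nu\neq 0$. This is a genuinely different, and more elementary, route than the paper's, which in that case takes the analytic separatrix tangent to the $\nu$-eigendirection, encloses it in an invariant surface $M$ furnished by the first integrals, and derives a contradiction from the fact that $\tilf|_M$ would be a saddle-node while complete integrability gives $\tilf|_M$ a non-constant first integral. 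Your sub-case $\nu=0$, $\lambda/\mu>0$ is also plausible, and is no less rigorous than the cancellation argument in Proposition~\ref{propmerofirstintegral} that it imitates.

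The genuine gap is in the sub-case $\nu=0$, $\lambda/\mu<0$, and it is exactly the one you flag yourself: you restrict $\tilf$ to ``an invariant surface tangent to one tangential eigendirection and to the $\nu$-direction'', but nothing guarantees that such an \emph{analytic} surface exists. A zero eigenvalue produces center-type invariant manifolds that in general exist only formally -- Euler's equation $x^2\,\partial/\partial x+(y-x)\,\partial/\partial y$ already has a divergent center separatrix -- so positing an analytic surface on which the saddle-node behaviour becomes visible begs the question; saddle-node behaviour is incompatible with a holomorphic first integral only once it sits on an analytic curve or surface. This is precisely where the paper's real work lies: by Poincar\'e--Dulac, $\tilf$ has a \emph{formal} separatrix $\hat S$ transverse to $E$ (tangent to the kernel direction); by the Ramis--Sibuya theorem this formal curve is the asymptotic expansion of an actual leaf $S$ of $\tilf$ parametrized over a sector; and complete integrability (the leaf lies in the analytic sets cut out by $\widetilde F$ and $\widetilde G$) then upgrades $S$ to a genuine analytic separatrix transverse to $E$. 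Only after that does one obtain the invariant surface $M\supset S$ from the first integrals -- in that order: separatrix first, surface second, not the reverse -- and the saddle-node contradiction on $M$. Note that this argument is uniform in the sign of $\lambda/\mu$, so it also subsumes your sub-case $\lambda/\mu>0$. Without the Poincar\'e--Dulac/Ramis--Sibuya mechanism, or some substitute converting the formal center direction into analytic data, your final paragraph is a restatement of the difficulty rather than a proof of it.
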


\begin{proof}
Let $\lambda_1, \lambda_2, \lambda_3$ denote the three eigenvalues of $\tilf$ at $p$ and assume that $\lambda_3$ corresponds
to the eigenvalue associated to the direction transverse to $E$. By assumption, not all the eigenvalues $\lambda_1, \lambda_2,
\lambda_3$ are equal to zero. To prove the lemma, let us assume for a contradiction that at least one of them
is equal to zero.

By using Corollary~\ref{corolary}, suppose first that $\lambda_1 = \lambda_2 = 0$. Thus $\lambda_3 \ne 0$ and it follows that
$\tilf$ possesses a separatrix $S$ transverse to $E$. Since $\fol$ admits two independent holomorphic first integrals, we conclude
that the separatrix $S$ must be contained in an invariant surface $M$ for $\tilf$ which is, in addition, transverse to $E$.
Naturally the intersection of $M$ with the exceptional divisor is a (local) analytic curve which is invariant by the foliation
induced on $E$ and this curve clearly contains the singular point in question. Besides, the mentioned curve is not contained in
the singular set of either foliations since these have isolated singularities. It then follows that the intersection of $M$
and $E$ defines a separatrix for the induced foliation on $E$. Consider now the restriction $\tilf_M$ of $\tilf$ to $M$.
The point $p$ belongs to the singular set of $\tilf_M$. More precisely, $p$ is a singular point of saddle-node type for
$\tilf_M$ what immediately yields a contradiction since $\fol$ is completely integrable so that $\tilf_M$ possesses a
non-constant first integral.

Assume now that $\lambda_3 = 0$. Then $\lambda_1 . \lambda_2 \ne 0$. The standard Poincar\'e-Dulac Theorem guarantees that $\tilf$
admits a formal separatrix $\hat{S}$ that is, in addition, transverse to $E$. Modulo performing finitely many blow-ups,
this formal separatrix admits
a (formal) parametrization through the variable $z$. In view of Ramis-Sibuya theorem, \cite{R-S}, there must exist an actual
leaf $S$ of $\tilf$ accumulating at $p$ and admitting an analytic parametrization defined on a certain open sector $V$ with coordinate
$z$. Furthermore, it admits $\hat{S}$ as its asymptotic expansion. Since $\tilf$ is completely integrable, it
follows that $S$ is an analytic separatrix for $\tilf$ at $p$. Moreover, this separatrix is transverse to $E$ since
it is asymptotic to $\hat{S}$ which, in turn, is (formally) transverse to $E$. The proof now follows as in the previous case.
\end{proof}

We are now able to prove Theorem~\ref{teodicritical}.

\begin{proof}[Proof of Theorem~\ref{teodicritical}]
Let $\fol$ be a foliation as stated in Theorem~\ref{teodicritical} and assume for a contradiction that $\fol$ does not
admit an invariant surface over which the induced foliation is dicritical.

Fix a singular point $p \in E$ and denote by $\lambda_1, \, \lambda_2, \, \lambda_3$ the eigenvalues of $\tilf$ at $p$.
From Lemma~\ref{lemmanonvanishing}, we know that none of these eigenvalues is equal to zero. Fix a separatrix through $p$
and consider the holonomy map with respect to this separatrix. The eigenvalues of the linear part of the holonomy map are
given, up to a relabeling of the eigenvalues, by $e^{2\pi i \lambda_2/ \lambda_1}$ and $e^{2\pi i \lambda_3/ \lambda_1}$.
Since $\fol$ is completely integrable, the holonomy map is periodic which, in turn, implies that $\lambda_i /\lambda_j
\in \Q^{\ast}$ for all $i \ne j$.

\smallbreak

{\bf Claim:} Let $\lambda_3$ denote the eigenvalue associated to the direction transverse to the component of the exceptional
divisor. Then $\lambda_1, \, \lambda_2$ have the same sign which, moreover, is opposite to the sign of $\lambda_3$.

\smallbreak

\begin{proof}[Proof of the Claim]
Let us first observe that $\lambda_1, \, \lambda_2, \, \lambda_3$ cannot all have the same sign. Indeed, if this were the case,
then one of the two possibilities below would hold for $\tilf$
\begin{itemize}
\item $\tilf$ is locally linearizable about $p$
\item $\tilf$ possesses a Poincar\'e-Dulac normal form about $p$, \cite{arnold}.
\end{itemize}
In the first case, it can immediately be checked by direct integration that all leaves nearby $p$ accumulate at $p$,
contradicting the complete integrability of the foliation. In the second case, a suitable finite sequence of punctual
blow-ups would yield a singular point of saddle-node type for the corresponding transform of $\fol$. Since the existence
of this type of singularity is not compatible with complete integrability, as seen in the proof of Lemma~\ref{lemmanonvanishing},
it follows that $\lambda_1, \, \lambda_2, \, \lambda_3$ cannot all have the same sign as desired. Thus there exists $i$
such that $\lambda_i$ has opposite sign to the other eigenvalues.

Assume for a contradiction that $\lambda_1, \, \lambda_2$, the eigenvalues associated to the foliation induced on $E$, have
opposite signs. Then $\tilf_E$ admits two separatrices $S_1, \, S_2$. Suppose that $\lambda_1$ and $\lambda_3$ have the same
sign and assume that $S_1$ is the separatrix associated to $\lambda_1$. Since the vector field is completely integrable,
there exists an invariant surface $M$, transverse to $E$ and containing $S_1$, whose associated foliation has eigenvalues
$\lambda_1, \, \lambda_3$ at $p$. In particular its eigenvalues at $p$ have the same sign, implying that all leaves on
$M$ accumulate at $p$. This contradicts Lemma~\ref{lemmanumberseparatrices}. The claim is proved.
\end{proof}

It has just been proved that all singular points $p \in E$ of $\tilf$ are dicritical singular points for $\tilf_E$. In other
words, they are singular points for $\tilf_E$ at which $\tilf_E$ admits infinitely many separatrices. In particular, the
Baum-Bott index of $\tilf_E$ at $p$, which is given by
\[
BB(\tilf_E,p) = \frac{\lambda_1}{\lambda_2} + \frac{\lambda_2}{\lambda_1} + 2 \, ,
\]
is greater than or equal to~$4$. Since $\tilf_E$ has only non-degenerate isolated singular points over $E$, the number of
singular points of $\tilf_E$ is given by $1 + k + k^2$, where $k$ denotes the degree of the foliation $\tilf_E$. Thus
\[
\sum_{i=1}^n BB(\tilf_E, p_i) \geq 4(k^2 + k + 1) \, ,
\]
where $p_1, \, \ldots, \, p_n$ are all the singularities of $\tilf_E$. Nonetheless, the Baum-Bott Theorem says that the sum
of the Baum-Bott indexes for all singular points should be
\[
\sum_{i=1}^n BB(\tilf_E, p) = (k + 2)^2 \, .
\]
The resulting contradiction ends the proof of Theorem~\ref{teodicritical}.
\end{proof}

\vspace{0.2cm}

\begin{flushleft}
{\sc Susana Pinheiro} \\
Centro de Matem\'atica da Universidade do Porto, \\
Portugal\\
spinheiro09@gmail.com \\

\end{flushleft}

\vspace{0.2cm}

\begin{flushleft}
{\sc Helena Reis} \\
Centro de Matem\'atica da Universidade do Porto, \\
Faculdade de Economia da Universidade do Porto, \\
Portugal\\
hreis@fep.up.pt \\

\end{flushleft}


\begin{thebibliography}{Dillo 83}

\bibitem[A]{arnold}
V. Arnold, {\it Chapitres suppl\'ementaires de la th\'eorie des \'equations diff\'erentielles
ordinaires}, ``Mir'', Moscow (1984)

\bibitem[C-K-P]{C-K-P} C. Camacho, N. Kuiper, J. Palis, \emph{The topology of holomorphic flows with singularity},
Inst. Hautes Études Sci. Publ. Math. No. 48 (1978), 5–38.

\bibitem[Ca-Sc]{C-S} L. C\^amara, B. Sc\'ardua, \emph{On the integrability of holomorphic vector fields},
Discrete Contin. Dynam. Systems 25 (2009), 481-493.

\bibitem[C-R-S]{C-R-S} F. Cano, C. Roche; M. Spivakovsky, \emph{Reduction of singularities of three-dimensional
line foliations}, preprint (2010).

\bibitem[C-M]{C-M} D. Cerveau, J.F. Mattei, \emph{Formes Int\'egrables Holomorphes Singuli\`eres}, Ast\'erisque,
97 (1982).

\bibitem[Ch]{chaperon} M. Chaperon, \emph{$C^k$-conjugacy of holomorphic flows near a singularity}, Inst. Hautes
Études Sci. Publ. Math. No. 64 (1986), 143–183.

\bibitem[G]{G} J. Guckenheimer, \emph{Hartman's theorem for complex flows in the Poincar\'e domain},
Compositio Math. 24 (1972), 75–82.

\bibitem[K]{Martine} M. Klughertz, \emph{Existence d'une int\'egrale premi\`ere m\'eromorphe pour des germes de
feuilletages \`a feuilles ferm\'ees du plan complexe}, Topology 31 no. 2 (1992), 255–269

\bibitem[L1]{ladis1} N.N. Ladis, \emph{Topological invariants of complex linear flows}, (Russian)
Differencial'nye Uravnenija 12, no. 12 (1976), 2159–2169, 2299

\bibitem[L2]{ladis2} N.N. Ladis, \emph{Topological equivalence of hyperbolic linear systems}, (Russian)
Differencial'nye Uravnenija 13, no. 2 (1977), 255–264, 379–380

\bibitem[L]{lehmann} D. Lehmann, \emph{R\'esidus des sous-vari\'et\'es invariantes d'un feuilletage singulier},
Ann. Inst. Fourier (Grenoble) 41, no. 1 (1991), 211–258

\bibitem[L-S]{seade} B. Lim\'on, J. Seade, \emph{Morse theory and the topology of holomorphic foliations near an
isolated singularity}, J. Topol. 4, no. 3 (2011), 667–686.

\bibitem[M-M]{M-M} J.F. Mattei, R. Moussu, \emph{Holonomie et int\'egrales premi\`eres}, Ann. Scien. \'Ec. Norm.
Sup. 13, no. 4 (1980), 469-523.

\bibitem[MQ-P]{MQ-P} M. McQuillan, D. Panazzolo, \emph{$(1 + d/dz)^{-1}$}, preprint (2012).

\bibitem[M]{Mo} R. Moussu, \emph{Sur l'existence d'int\'egrales premi\`eres holomorphes},
Ann. Scuola Norm. Sup. Pisa Cl. Sci. (4) 26, no. 4 (1998), 709–717

\bibitem[P]{P} D. Panazzolo, \emph{Resolution of singularities of real-analytic vector fields in dimension three},
Acta Math. 197, no. 2 (2006), 167–289.

\bibitem[R-S]{R-S} J.P. Ramis, Y. Sibuya, \emph{Hukuhara domains and fundamental existence and uniqueness theorems for
asymptotic solutions of Gevrey type}, Asymptotic Analysis. 2  (1989), 39–94.

\bibitem[S1]{Suzuki2} M. Suzuki, \emph{Sur les relations d'\'equivalences ouvertes dans les espaces analytiques},
Ann. \'Ec. Norm. Sup. 4 (1974), 531-542.

\bibitem[S2]{Suzuki1} M. Suzuki, \emph{Sur les int\'egrales premi\`eres de certains feuilletages analytiques complexes},
Lect. Notes in Math. Vol. 670 (1976), 53-58.


\end{thebibliography}
\end{document}